\newtheorem{theorem}{Theorem}[section]
\newtheorem{corollary}[theorem]{Corollary}
\newtheorem{proposition}[theorem]{Proposition}
\newtheorem{definition}[theorem]{Definition}
\newtheorem{lemma}[theorem]{Lemma}
\newtheorem{claim}[theorem]{Claim}
\newtheorem*{theorem*}{Theorem}
\newtheorem*{proposition*}{Proposition}
\newtheorem*{definition*}{Definition}
\newtheorem*{lemma*}{Lemma}
\newtheorem*{claim*}{Claim}
\newtheorem*{corollary*}{Corollary}
\newtheorem*{convention*}{Convention}
\newtheorem{observation}[theorem]{Observation}
\theoremstyle{definition}
\newtheorem{convention}[theorem]{Convention}
\newtheorem{question}{Question}
\theoremstyle{remark}
\newtheorem{rem}[theorem]{Remark}
\newtheorem*{rem*}{Remark}
\newtheorem*{acknowledgement}{Acknowledgement}
\newcommand{\wt}[1]{\widetilde{#1}}
\newcommand\bR{\mathbb R}
\newcommand\bZ{\mathbb Z}
\newcommand\bN{\mathbb N}
\newcommand\bH{\mathbb H}
\newcommand{\R}{\mathbb R}
\DeclareMathOperator{\disp}{d}
\DeclareMathOperator{\trans}{t}
\DeclareMathOperator{\Homeo}{Homeo}
\newcommand\orb{ \mathcal O }
\newcommand\hfs{\widetilde{\mathcal F}^{s} }
\newcommand\hfu{\widetilde{\mathcal F}^{u} }
\newcommand{\cC}{\mathcal{C}}
\newcommand{\cD}{\mathcal{D}}
\newcommand{\cP}{\mathcal{P}}
\newcommand{\cT}{\mathcal{T}}
\newcommand{\MCG}{\mathrm{MCG}}
\newcommand{\HomeoZ}{\mathrm{Homeo}^{\bZ}(\bR)}
\newcommand{\fix}{\mathrm{Fix}}
\title[Orbit equivalence of $\bR$-covered Anosov flows]{Orbit equivalence of $\bR$-covered Anosov flows and hyperbolic-like actions on the line}
\author[Thomas Barthelm\'e]{Thomas Barthelm\'e}
\address{Queen's University, Kingston, Ontario}
\email{thomas.barthelme@queensu.ca}
\urladdr{sites.google.com/site/thomasbarthelme}
\author[Kathryn Mann]{Kathryn Mann, appendix with Jonathan Bowden}
 \address{Cornell University, Ithaca, NY}
 \email{k.mann@cornell.edu}
\urladdr{https://e.math.cornell.edu/people/mann}
\begin{document}
 
 \begin{abstract}
We prove a rigidity result for group actions on the line whose elements have what we call ``hyperbolic-like" dynamics.  Using this, 
we give a rigidity theorem for $\bR$-covered Anosov flows on 3-manifolds, characterizing orbit equivalent flows in terms of the elements of the fundamental group represented by periodic orbits.  As consequences of this, we give an efficient criterion to determine the isotopy classes of self orbit equivalences of $\bR$-covered Anosov flows, and prove finiteness of contact Anosov flows on any given manifold.

In the appendix with Jonathan Bowden, we prove that orbit equivalences of contact Anosov flows correspond exactly to isomorphisms of the associated contact structures. This gives a powerful tool to translate results on Anosov flows to contact geometry and vice versa.  We illustrate its use by giving two new results in contact geometry: the existence of manifolds with arbitrarily many distinct Anosov contact structures, answering a question of Foulon--Hasselblatt--Vaugon, and a virtual description of the group of contact transformations of a  Anosov contact structure, generalizing a result of Giroux and Massot.
 \end{abstract}
 
 \maketitle

\section{Introduction}

\subsection{Hyperbolic-like actions} 
A well known theorem of H\"older states that any group acting freely by homeomorphisms of the line is abelian and conjugate to a group of translations. This was generalized in an unpublished work of Solodov (see, e.g.~\cite{FF, Kovacevic, Bar_caracterisation}) to the statement that a group action on the line where each nontrivial element has at most {\em one} fixed point is semi-conjugate to an action by affine transformations, or abelian with a global fixed-point.  Later, the proof of the Convergence Group Theorem \cite{Gabai, CassonJungreis94} established that a group action on the circle where each element has at most {\em two} fixed points is, under some additional technical dynamical hypotheses, conjugate to a subgroup of  $\mathrm{PSL}(2,\R)$ acting on $\R \mathrm{P}^1$ by M\"obius transformations.  This important result was the last step in the proof of the Seifert fiber space conjecture. 

While one cannot reasonably expect further generalizations in this vein\footnote{One reason for this is that the target groups $\R$, $\mathrm{Aff_+(\R)}$, and $\mathrm{PSL}(2,\R)$ are essentially the only Lie groups acting transitively on $1$-manifolds, so the only natural candidates for such targets.}, it is a natural question to ask what other fixed-point data might determine an action. In this spirit, we show that under suitable hypotheses, an action of a group on the line is determined up to conjugacy by the set of elements acting with fixed points.   Like the statement of the convergence group theorem, our hypotheses are motivated by an application to a classification problem, in our case the classification of $\R$-covered Anosov flows on $3$-manifolds.  Say that an action on the line is {\em hyperbolic-like} if it commutes with integer translation, and each nontrivial element either acts freely or has exactly two fixed points in $[0,1)$, one attracting and one repelling.  We prove the following rigidity result for such actions. 

\begin{restatable}[Rigidity of hyperbolic-like actions]{theorem}{hypthm} \label{thm:hyp_action}
A minimal, hyperbolic-like action of a nonabelian group $G$ on $\R$ is determined up to conjugacy by the set of elements of $G$ that act with fixed points.    
\end{restatable} 

Minimal and hyperbolic-like are both properties of the actions of $3$-manifold fundamental groups on $\R$ induced by $\R$-covered Anosov flows.  This allows us to use Theorem \ref{thm:hyp_action} to classify such flows up to orbit equivalence.  

\subsection{Orbit equivalence of Anosov flows} 
Recall that two flows on a manifold $M$ are {\em orbit equivalent} if there is a homeomorphism $f\colon M \to M$ taking orbits of one to orbits of the other, and {\em isotopically equivalent} if this homeomorphism can be taken to be isotopic to the identity.  

An Anosov flow is called $\R$-covered if the leaf space of its weak-stable foliation is homeomorphic to $\R$. (In the case of 3-manifolds, it is equivalent that the weak-unstable foliation has leaf space $\R$).  On 3-manifolds, there are many constructions of $\R$ covered flows, and examples of manifolds admitting arbitrarily many inequivalent $\R$-covered flows. 
Here, we give a characterization of orbit and isotopy equivalent $\R$-covered flows on $3$-manifolds by their {\em free homotopy classes of periodic orbits}.  
For a flow $\varphi$ on a manifold $M$, let $\cP(\varphi)$ denote the set of conjugacy classes of elements in $\pi_1(M)$ represented by the free homotopy classes of periodic orbits of $\varphi$.  We show the following.  

\begin{theorem}[Classification of $\bR$-covered Anosov flows] \label{thm_main}
Let $\varphi$ and $\psi$ be $\bR$-covered Anosov flows on a closed $3$ manifold $M$.  
\begin{enumerate} 
\item $\varphi$ and $\psi$  are isotopically equivalent if and only if $\cP(\varphi) = \cP(\psi)$.
\item $\varphi$ and $\psi$  are orbit equivalent\footnote{
In our definition of orbit equivalence, we do not require the homeomorphism to match \emph{oriented} orbits to oriented orbits.
If one wants to consider only orbit equivalences that preserve orbit orientation, then the conclusion of Theorem \ref{thm_main} will be that if one flow is transversally orientable then the other is also, and the orbit equivalence can be upgraded to an orientation-preserving orbit equivalence (using \cite[Th\'eor\`eme C]{Bar_caracterisation}).  If one (hence both) of the flows are not transversally orientable, then $\varphi$ is orbit equivalent to either $\psi$ or $\psi^{-1}$.} if and only if there exists a homeomorphism $f\colon M \to M$ such that $f_\ast(\cP(\varphi)) = \cP(\psi)$. Moreover, the orbit equivalence can be taken to be in the isotopy class of $f$. 
\end{enumerate}
 \end{theorem}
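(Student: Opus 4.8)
The plan is to reduce both statements to facts about bifoliated planes, reconstruct the bifoliated plane of an $\bR$-covered Anosov flow from $\cP$, and then bootstrap (1) to (2). The ``only if'' implications are immediate: an orbit equivalence $h\colon M\to M$ from $\varphi$ to $\psi$ sends closed orbits to closed orbits (a circle cannot sit inside a line-orbit), so $h_\ast$ carries the free homotopy classes of closed orbits of $\varphi$ bijectively onto those of $\psi$, i.e.\ $h_\ast(\cP(\varphi))=\cP(\psi)$; this becomes $\cP(\varphi)=\cP(\psi)$ when $h$ is isotopic to the identity. For the ``if'' directions I would begin from the standard fact (Barbot; Fenley) that, for Anosov flows on $M$, an orbit equivalence is --- up to isotopy --- the same datum as a homeomorphism $\orb_\varphi\to\orb_\psi$ between the orbit spaces of the flows lifted to $\wt M$ which is equivariant for the $\pi_1(M)$-actions through some automorphism of $\pi_1(M)$ and matches stable leaves with stable leaves and unstable with unstable (as unoriented foliations; for $\bR$-covered flows these are genuine foliations), the equivalence being isotopic to the identity exactly when the automorphism may be taken to be $\id$. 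So for~(1) it suffices to construct such an $\id$-equivariant, foliation-preserving homeomorphism from the hypothesis $\cP(\varphi)=\cP(\psi)$, and for~(2) one equivariant for some automorphism.

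The core is then to reconstruct the $\pi_1(M)$-equivariant homeomorphism type of $(\orb_\varphi,\fs,\fu)$ from $\cP(\varphi)$ alone. First, $\cP(\varphi)$ is equivalent data to the set of conjugacy classes of elements of $\pi_1(M)$ that fix a point of $\orb_\varphi$ (the passage between the two uses only conjugation and powers); call these the \emph{periodic classes}, and note $\varphi$ and $\psi$ determine the same ones. By Fenley's structure theory, the fixed set in $\orb_\varphi$ of a periodic element $\gamma$ is the corner set of a $\gamma$-invariant chain of lozenges, and the crucial claim is that the way these lozenge chains are arranged in $\orb_\varphi$ --- which of them meet a common leaf, which lozenges nest or link, and the cyclic order they induce on the ideal boundary circle of the bifoliated plane --- is encoded purely in $\pi_1(M)$ together with its subset of periodic elements (through commutation relations, centralizers, the decomposition into lozenge-related pairs, and the order relations these force). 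Since closed orbits of an Anosov flow are dense in $M$, the corners of lozenges of periodic elements are dense in $\orb_\varphi$ and their ideal points dense in the ideal circle, so this combinatorial ``periodic skeleton'' determines $(\orb_\varphi,\fs,\fu)$ with its action as a suitable order/metric completion; extracting the same skeleton from the periodic classes of $\psi$ produces the required equivariant homeomorphism.

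Running this requires first knowing that $\varphi$ and $\psi$ are of the same type in the Barbot--Fenley dichotomy (suspension or skewed), which I would also read off from $\cP$: suspensions have no nontrivial lozenges, so their periodic skeleton is rigid in a way characterized inside $\pi_1(M)$, whereas skewed flows are not. In the suspension case $M$ is a torus bundle, both flows are suspensions of its monodromy, and they are orbit equivalent by classical results (isotopically equivalent up to reversing the flow, cf.\ the footnote); in the skewed case one runs the reconstruction above. To deduce~(2) from~(1): given a homeomorphism $f$ with $f_\ast(\cP(\varphi))=\cP(\psi)$, the flow $f^{-1}\psi f$ is orbit equivalent to $\psi$ and has the same bifoliated plane up to relabeling the $\pi_1(M)$-action, so the reconstruction still applies and $\cP(f^{-1}\psi f)=\cP(\varphi)$; by~(1) there is an isotopic equivalence $g$ between $\varphi$ and $f^{-1}\psi f$, whence $f\circ g$ is an orbit equivalence between $\varphi$ and $\psi$ in the isotopy class of $f$. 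The refinements about orientations of orbits are exactly Barbot's Th\'eor\`eme~C \cite{Bar_caracterisation}, as in the footnote.

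I expect the main obstacle to be the reconstruction step: formulating the ``periodic skeleton'' --- the lozenge chains of periodic elements, their mutual linking and nesting, and the cyclic order they induce at infinity --- in purely group-theoretic terms from the subset of periodic elements of $\pi_1(M)$, and proving, crucially via density of closed orbits, that it determines the bifoliated plane with its $\pi_1(M)$-action up to equivariant homeomorphism. Everything else is a formal reduction or an appeal to the structure theory of $\bR$-covered Anosov flows.
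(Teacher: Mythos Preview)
Your overall architecture is right and matches the paper's: reduce to the skew case, reconstruct the equivariant orbit space from the periodic data, then invoke Barbot's averaging to get an honest orbit equivalence; deduce~(2) from~(1) by conjugating. The paper, however, executes the reconstruction step quite differently and more concretely than your ``periodic skeleton of lozenge chains in the bifoliated plane'' plan. Rather than work in the two-dimensional orbit space, the paper drops to the one-dimensional stable leaf space $\Lambda^s\cong\bR$: in the skew case $\pi_1(M)$ acts on $\Lambda^s$ commuting with the one-step-up translation $\tau$, so one lands inside $\HomeoZ$, and an element lies in $\cP(\varphi)$ exactly when it is \emph{hyperbolic-like} (one attracting and one repelling fixed point in $[0,1)$). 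The paper then proves a stand-alone rigidity statement for such actions (Proposition~\ref{prop:conj}): a minimal nonabelian subgroup of $\HomeoZ$ all of whose fixed-point elements are hyperbolic-like is determined up to conjugacy by the set of its hyperbolic-like elements. The engine is not centralizers or lozenge nesting but two elementary lemmas showing that whether two hyperbolic-like elements $a,b$ have linked fixed sets, and if unlinked the cyclic order of $a_\pm,b_\pm$, is detected by which words $a^nb^m$ (or $b^Na^N$) lie in $\cP$; one then builds an order-preserving map on the dense set of attracting fixed points and extends. This buys a clean, self-contained $1$-dimensional argument and a proposition of independent interest about circle actions, whereas your route would require formalizing the lozenge/ideal-circle combinatorics and showing they are extractable from $\cP$ alone---doable, but heavier. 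Two smaller points: for the suspension case you don't need to read the type off $\cP$; Plante's theorem already forces any Anosov flow on a $T^2$-bundle to be a suspension, and a folklore uniqueness finishes it. Also, the paper handles the non-transversely-orientable case explicitly by applying Proposition~\ref{prop:conj} to the normal subgroup generated by squares and then extending the conjugacy to all of $\pi_1(M)$; your outline should flag this step.
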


The $\bR$-covered Anosov flows on closed 3-manifolds form a rich class of examples, including geodesic flows on closed surfaces, all contact Anosov flows, and diverse examples on many hyperbolic 3-manifolds and on manifolds with nontrivial JSJ decomposition (see~\cite{Fen_Anosov_flow_3_manifolds,FouHass_contact,BF_counting,BI,BowdenMann}).  Among other applications, our main theorem allows us to prove finiteness of contact Anosov flows.   We describe and motivate the main applications now.  

\subsection{Application 1: Classifying self orbit equivalences} \label{sec_intro_SOE}
Describing the centralizer of a given diffeomorphism is a classical question in discrete-time dynamical systems; notably, Smale's conjecture \cite{Smale} is that the centralizer of a generic diffeomorphism should be trivial.   By contrast, diffeomorphisms which embed in a flow have an $\R$-subgroup in their centralizer given by the flow, so the right analog of  Smale's conjecture in this case is to ask whether the flow agrees (virtually) with the centralizer of the diffeomorphism.  This motivates the general program to classify all symmetries of a given flow, and, more generally, classify the symmetries of the foliation given by orbits of a flow, i.e.~classify the self orbit equivalences.  
As with Smale's conjecture, this question is quite sensitive to regularity --- for instance, 3-dimensional Anosov flows often have many self orbit equivalences, while the set of those which may be realized by $C^1$ diffeomorphisms was shown to be virtually trivial by Bartheml\'e, Fenley and Potrie \cite{BFP_CAF}.

Theorem \ref{thm_main} gives the following immediate characterization of isotopy classes of self orbit equivalences. 
 \begin{corollary}\label{cor_soe_iff_preserve}
 A map $f\colon M \to M$ is in the isotopy class of a self orbit equivalence of a $\bR$-covered Anosov flow $\varphi$ if and only if $f_\ast \colon \pi_1(M) \to \pi_1(M)$ preserves the set of conjugacy classes realized by periodic orbits of $\varphi$.
 \end{corollary}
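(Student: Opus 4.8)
The plan is to deduce this directly from Theorem~\ref{thm_main}(2) by specializing to the case $\psi=\varphi$, using the fact that a self orbit equivalence of $\varphi$ is exactly an orbit equivalence between $\varphi$ and itself.

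First I would treat the ``only if'' direction. Suppose $f$ lies in the isotopy class of a self orbit equivalence $h$ of $\varphi$. Since $h$ sends orbits of $\varphi$ to orbits of $\varphi$, it sends periodic orbits to periodic orbits and hence induces a permutation of their free homotopy classes, so $h_\ast(\cP(\varphi)) = \cP(\varphi)$. As $f$ and $h$ are isotopic they induce the same map on the set of conjugacy classes of $\pi_1(M)$ (isotopic homeomorphisms induce the same outer automorphism), and therefore $f_\ast(\cP(\varphi)) = \cP(\varphi)$.

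For the ``if'' direction I would invoke Theorem~\ref{thm_main}(2) with $\psi=\varphi$. If $f_\ast$ preserves the set of conjugacy classes realized by periodic orbits of $\varphi$, i.e.\ $f_\ast(\cP(\varphi)) = \cP(\psi)$, then the theorem yields an orbit equivalence between $\varphi$ and $\psi=\varphi$ lying in the isotopy class of $f$; by definition this map is a self orbit equivalence of $\varphi$, so $f$ is isotopic to one.

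There is no real obstacle here: the entire content is already packaged in Theorem~\ref{thm_main}, and the corollary is an essentially formal restatement. The only things worth making explicit are the two elementary observations used above, namely that a self orbit equivalence permutes the free homotopy classes of periodic orbits and that isotopic homeomorphisms act identically on conjugacy classes in $\pi_1(M)$.
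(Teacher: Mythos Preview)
Your proposal is correct and matches the paper's approach exactly: the paper states the corollary as an immediate consequence of Theorem~\ref{thm_main} without giving a separate proof, and what you have written is precisely the unpacking of that immediacy (specializing Theorem~\ref{thm_main}(2) to $\psi=\varphi$ for the ``if'' direction, and the elementary observation that a self orbit equivalence permutes free homotopy classes of periodic orbits for the ``only if'' direction).
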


With more work, we improve this to give a criterion to explicitly describe such classes, as follows.  The case of interest here is for {\em skew} flows, those which are not orbit equivalent to a suspension of an Anosov diffeomorphism on the torus, as the orbit-equivalences of suspension flows are essentially trivial.

If $M$ is a $3$-manifold with a skew-Anosov flow $\varphi$, then $M$ is orientable and irreducible so admits a JSJ decomposition along tori into Seifert and atoroidal pieces.
By Mostow rigidity and the structure of mapping class groups of Seifert spaces, the group $\mathrm{Dehn}(M)$ of isotopy classes of diffeomorphisms generated by Dehn twists along embedded tori in $M$ has finite index in $\MCG(M)$ \cite{JoBook}.  
We give a criterion for when maps generated by certain Dehn twists represent a self orbit equivalence of a flow.   
Given a flow $\varphi$, and Dehn twist $D_{\beta}$ we may define an {\em orbit displacement} function, as follows.  
For each periodic orbit $c$ in $M$, we set
\[
\disp_\phi(c,D_\beta) = \sum_{i=1}^{k} 2 \epsilon_i  \trans_\varphi(\beta_i)
\]
where $k$ is the number of transverse intersections of $c$ with $T$ (when $T$ is in quasi-transverse position with respect to the flow,
 $\trans_\varphi(\beta)$ is the {\em translation number} of the action of $\beta$ on $\Lambda^s(\varphi)$, and $\epsilon_i = \pm 1$ is an orientation term for each intersection.  When $f$ is a composition of Dehn twists $D_{\beta_i}$ on disjoint non-isotopic tori, we define $\disp_\phi(c,f)$ to be the sum of the displacements $\disp_\phi(c,D_{\beta_i})$.  Formal definitions are given in 
Section \ref{sec:SOE}.  

\begin{theorem}[Criterion for self-orbit equivalence]\label{thm_criterion}
Let $\varphi$ be a transversally oriented skew Anosov flow.
A map $f$ which is a composition of Dehn twists on disjoint non-isotopic tori is isotopic to a self orbit equivalence of $\varphi$ if and only if, for all periodic orbits $c$ of $\varphi$, we have $\disp_\varphi(c,f) = 0$.
\end{theorem}

There are many situations in which this criterion is easy to check.  As two sample applications, we have the following results for any transversally oriented skew Anosov flow $\varphi$.  

\begin{corollary}\label{cor_horizontal_twists}
 Let $D_{\mathrm{per}}$ be the subgroup of the mapping class group of $M$ generated by Dehn twists along curves represented by periodic orbits. Then any isotopy class $[h]\in D_{\mathrm{per}}$ is represented by a self orbit equivalence of $\varphi$.
\end{corollary}

\begin{corollary}\label{cor_non_horizontal_twists}
 Let $T$ be an embedded torus and $D_{\beta}$ a Dehn twist on $T$ with nonzero translation number.  
 Then $D_{\beta}$ is isotopic to a self orbit equivalence of $\varphi$ if and only if $T$ is separating in $M$.
\end{corollary}

In Section \ref{sec:special_cases} we discuss a number of other special cases where one may use the topology of $M$ to reduce Theorem \ref{thm_criterion} to a simpler statement.  
We also discuss the complementary case to Theorem \ref{thm_criterion} for maps generated by Dehn twists in tori which cannot be realized disjointly, namely tori inside a single Seifert piece. See Theorems \ref{thm_oneJSJpiece} and Theorems \ref{thm_linear_graph}.

 \begin{rem}
 One can easily describe all self orbit equivalences of a given skew-Anosov flow in a fixed isotopy class.  Such a flow comes with the data of a homeomorphism $\eta\colon M \to M$ realizing the {\em half-step-up} map
 on the orbit space. See section \ref{sec_skew} for details. It follows from  \cite[Theorem 1.1]{BarG} that any two self orbit equivalences $h_1$ and $h_2$ in the same isotopy class differ, up to isotopy along the flow lines, by some power of $\eta$.  
 \end{rem} 
 
Theorem \ref{thm_criterion} identifies which elements of a large subgroup of the mapping class group of $M$ are represented by self-orbit equivalences.  However, passing to the full mapping class group requires a different approach.  In particular, we do not know the answer to the following.

\begin{question} 
Does there exist an ($\bR$-covered or not) Anosov flow on a hyperbolic $3$-manifold $M$ such that the only self orbit equivalences are isotopic to identity?
\end{question}
  
\begin{question} 
Does there exist an ($\bR$-covered or not) Anosov flow on a hyperbolic $3$-manifold $M$ such that every element of the mapping class group of $M$ is represented by a self orbit equivalence?
 \end{question}
 
 \begin{rem}
  The case of most interest for both questions is when the manifold considered has non-trivial mapping class group. Note however that there are, yet, no constructions of Anosov flows on a $3$-manifold that has a trivial mapping class group. So even the trivial case for these questions is not yet known.
 \end{rem}

 \begin{rem}
 In \cite{BFP_CAF}, the authors introduced a class of partially hyperbolic diffeomorphisms called ``collapsed Anosov flows", which are semi-conjugate to self orbit equivalences of Anosov flows. Hence, the criterion of Theorem \ref{thm_criterion} (and its applications for certain manifolds, as in Theorems \ref{thm_oneJSJpiece} and \ref{thm_linear_graph}) describes the possible isotopy classes of collapsed Anosov flows associated with $\bR$-covered Anosov flows. 
\end{rem}

 \subsection{Application 2: Contact Anosov flows} 
An Anosov flow is said to be \emph{contact} if it is the Reeb flow of a contact $1$-form $\alpha$.
Notice that with this definition, the contact structure $\xi = \ker \alpha$ is automatically transversely orientable since it is given as the kernel of a (globally defined) contact form. 
The contact Anosov flows are an important and well studied class of examples, as they can be thought of as a generalization of the geodesic flow on manifolds of negative curvature, and many dynamical results on Anosov flows, for instance, exponential decay of correlations \cite{Liverani}, are known only for the contact case.  In the context of 3-manifolds, Barbot \cite{Barbot_plane_affine_geometry} proved that contact Anosov flows on 3-manifolds are necessarily $\bR$-covered and skew, while Foulon-Hasselblatt surgery \cite{FHV} produces many examples.  In fact, it is currently an open question whether every $\bR$-covered skew flow is orbit equivalent to a contact flow.   As progress towards a better understanding of these flows, we show that isomorphism of the associated contact structures is the same as orbit equivalence of flows, giving a powerful tool to use the machinery of flows to answer questions in contact geometry and vice versa. 

\begin{theorem}\label{thm_isotopy_contact}
Two contact Anosov flows on a 3-manifold are orbit equivalent if and only if their respective contact structures are contactomorphic.   They are isotopically equivalent if and only if the contact structures are isotopic.  
\end{theorem}

Recall that two contact structures $\xi_1, \xi_2$ on a manifold $M$ are {\em contactomorphic} if there exists a diffeomorphism $g\colon M \to M$ such that $g_\ast \xi_1 = \xi_2$, and they are {\em isotopic} if $g$ can be taken to be isotopic to the identity.  

We prove the reverse of Theorem \ref{thm_isotopy_contact} in Section \ref{sec:contact}, and the forward direction in the Appendix, see Theorem \ref{thm_converse_contact}.  
Using the coarse classification of tight contact structures of Colin--Giroux--Honda \cite{CGH} and the result (proved with Jonathan Bowden in the appendix) that Anosov contact structures have zero torsion, we also obtain the following.  
\begin{theorem}[Finiteness for contact Anosov flows] \label{thm:finite_contact}
On any given 3-manifold $M$, there are only finitely many contact Anosov flows on $M$ up to orbit equivalence.
\end{theorem}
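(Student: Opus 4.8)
The plan is to reduce the statement to the finiteness theorem of Colin--Giroux--Honda \cite{CGH} via the contact-geometric translation provided by Theorem \ref{thm_isotopy_contact}. To a contact Anosov flow $\varphi$ on $M$ one associates the contact structure $\xi_\varphi = \ker \alpha$, where $\alpha$ is a contact form whose Reeb flow is $\varphi$. The decisive point is that the isomorphism class of $\xi_\varphi$ determines the orbit equivalence class of $\varphi$: this is exactly the content of the first half of Theorem \ref{thm_isotopy_contact}. Hence it suffices to show that, as $\varphi$ ranges over all contact Anosov flows on $M$, the structures $\xi_\varphi$ fall into only finitely many isomorphism classes.

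To apply \cite{CGH} one must first know that every $\xi_\varphi$ is tight. Here I would use that no periodic orbit of an Anosov flow on a closed $3$-manifold is null-homotopic --- a classical fact, which follows for instance from the simple connectivity of the orbit space of the lifted flow, or more directly from hyperbolicity. Consequently $\alpha$ is hypertight (its Reeb flow has no contractible periodic orbit), and since an overtwisted contact structure admits a contractible periodic Reeb orbit for \emph{every} defining contact form (Hofer), $\xi_\varphi$ is tight. The second input is the result proved with Bowden in the appendix: an Anosov contact structure has zero Giroux torsion.

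Now I would invoke the coarse finiteness theorem of Colin--Giroux--Honda: on a fixed closed oriented $3$-manifold, for each $n$ there are only finitely many isotopy classes of tight contact structures with torsion at most $n$. Applying this with $n = 0$, the structures $\xi_\varphi$ lie in finitely many isotopy classes, hence a fortiori in finitely many isomorphism classes (isomorphism being coarser than isotopy). Combined with the reduction of the first paragraph via Theorem \ref{thm_isotopy_contact}, this yields finitely many orbit equivalence classes of contact Anosov flows on $M$, which is the assertion of Theorem \ref{thm:finite_contact}.

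Structurally the argument is a short assembly, and the weight is carried entirely by two external inputs: the zero-torsion statement of the appendix --- without a uniform torsion bound the Colin--Giroux--Honda finiteness genuinely fails on toroidal manifolds, so this step is not optional --- and the verification that $\xi_\varphi$ is tight, which is what makes \cite{CGH} applicable at all. I would also be careful to use only the "isomorphism of contact structures implies orbit equivalence of flows" direction of Theorem \ref{thm_isotopy_contact}, which is established in the body of the paper, rather than the converse proved in the appendix, so that the proof of Theorem \ref{thm:finite_contact} does not circularly depend on appendix material beyond the zero-torsion computation.
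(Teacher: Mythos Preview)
Your proposal is correct and follows essentially the same route as the paper: reduce to Colin--Giroux--Honda finiteness via the zero-torsion result of the appendix (Proposition \ref{prop_zero_torsion}) and then invoke Theorem \ref{thm_isotopy_contact}. The only cosmetic difference is that you argue tightness separately via hypertightness and Hofer, whereas the paper obtains it implicitly from the strong fillability established in Proposition \ref{prop_zero_torsion}; either way the argument goes through, and the paper also notes (as you implicitly assume) that $M$ is irreducible since it supports an Anosov flow.
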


Thanks to Theorem \ref{thm_isotopy_contact}, one can now fully translate results about contact Anosov flows to result about  Anosov contact structures and vice versa. We illustrate this principle in the Appendix with two examples: First, we show (Theorem \ref{thm_N_contact_structures}) that there exists hyperbolic $3$-manifolds with arbitrarily many non-contactomorphic Anosov contact structures, answering a question raised in \cite{FHV}. Second, we give a virtual description of the group of contact transformations of a  Anosov contact structure up to isotopy on some manifolds (Theorem \ref{thm_ContactMCG}). This generalizes a result by Giroux and Massot \cite{GirouxMassot}.


 \subsection{Outline of the article} 
 Section \ref{sec:main} gives a brief introduction to the structure of $\R$-covered flows on 3-manifolds, followed by the proof of Theorem \ref{thm_main}.  
 Section \ref{sec:SOE} contains the proof of Theorem \ref{thm_criterion}, and the applications to contact flows are given in Section \ref{sec:contact} and the appendix.

 \begin{acknowledgement}
 K.M.~was partially supported by NSF CAREER grant  DMS 1844516 and a Sloan Fellowship.
 T.B.~was partially supported by the NSERC (Funding reference number RGPIN-2017-04592).
 J.B.~was partially supported by the Special Priority Programme SPP 2026 Geometry at Infinity funded by the DFG.
 The authors thank Anne Vaugon and Vincent Colin for very helpful discussions. We also thank Thierry Barbot and Sergio Fenley for their detailed comments and suggestions on an earlier version of the article.
 \end{acknowledgement}
 
 
 \section{Proofs of Theorems \ref{thm:hyp_action} and \ref{thm_main}} \label{sec:main} 
The first statement of Theorem \ref{thm_main} is a special case of the second, so we in fact only need to prove that assertion.  Recall this is the statement that two flows $\varphi$ and $\psi$  are orbit equivalent if and only if there exists a homeomorphism $f\colon M \to M$ such that $f_\ast(\cP(\varphi)) = \cP(\psi)$, and if this holds the orbit equivalence can be taken to be in the isotopy class of $f$.   The forward direction is immediate; we now set-up the proof of the reverse direction.   This will lead us to the statement and proof of Theorem \ref{thm:hyp_action}, we finish the proof of Theorem \ref{thm_main} at the end of this section.  
 
Throughout the work, we assume the reader has basic familiarity with Anosov flows.  We recall below the essential structure theory of $\R$-covered flows on 3-manifolds that is  used in the proof.  Further background can be found in \cite{FH_book}, and results specific to the topological theory of Anosov flows in dimension $3$ can be found in \cite{Barbot_HDR}.
 
By work of Fenley \cite{Fen_Anosov_flow_3_manifolds} and Barbot \cite{Bar_caracterisation}, an $\R$-covered Anosov flow on a closed 3-manifold is either conjugate to the suspension of an Anosov diffeomorphism of $T^2$ or is {\em skew}, meaning that the orbit space of the lift of the flow to $\widetilde{M}$ is homeomorphic to the infinite diagonal strip 
\[\mathcal{O} = \{ (x,y) \in \mathbb{R}^2 \ | \ |x - y| <1\}\] via a homeomorphism taking the stable leaves of the flows to the horizontal cross sections of the strip, and unstable leaves to the vertical cross sections. 

Theorem \ref{thm_main} follows from purely topological considerations in the suspension case, as follows.  Suppose that $M$ is a $3$-manifold that fibers as the mapping torus of an Anosov diffeomorphism $A$ on the torus.  By a theorem of Plante \cite{Plante}, any Anosov flow on $M$ is necessarily of suspension type.  Any other suspension flow comes from a fibering of $M$ as the mapping torus of an Anosov diffeomorphism. It is a ``folklore" result that such a diffeomorphism must be conjugate to either $A$ or $A^{-1}$ --- a detailed proof can be found in \cite{Funar}.  Since our definition of orbit equivalence allows a flow to be conjugate to its inverse, we conclude that $M$ admits only one Anosov flow up to orbit equivalence. 

The case of skew flows is much more interesting.  For instance, examples of (closed, hyperbolic) manifolds that admit arbitrarily many inequivalent skew-Anosov flows were constructed in \cite{BowdenMann}.   As a first step to the proof of Theorem \ref{thm_main} in the skew case, we need to recall some general structure theory due to Barbot and Fenley that will allow us to essentially reduce the theorem to a statement about actions of $\pi_1(M)$ on $S^1$.  
 
\subsection{Skew-Anosov flows} \label{sec_skew}

Consider again the infinite diagonal strip model for the orbit space as shown in Figure \ref{fig:R-covered_case}.

\begin{figure}[h]
\begin{center}
\scalebox{0.85}{
\begin{pspicture}(-0.5,-0.5)(6,6)
\psline[linewidth=0.04cm,linestyle=dashed](2,0.5)(5,3.5)
\psline[linewidth=0.04cm,linestyle=dashed](0.5,2)(3.5,5)
\psline[linewidth=0.04cm,arrowsize=0.05cm 2.0,arrowlength=1.4,arrowinset=0.4]{->}(1,0)(5,0)
\psline[linewidth=0.04cm,arrowsize=0.05cm 2.0,arrowlength=1.4,arrowinset=0.4]{->}(0,1)(0,5)
\rput(5.2,-0.2){$\Lambda^u$}
\rput(-0.2,5.2){$\Lambda^s$}
 \psline[linewidth=0.04cm,linecolor=blue](0.5,2)(3.5,2)
 \psline[linewidth=0.04cm,linecolor=red](2,0.5)(2,3.5)
\put(1.6,1.6){$o$}
\psline[linewidth=0.04cm,linecolor=red,linestyle=dashed](3.5,2)(3.5,5)
 \rput(3.9,1.8){$o_s$}
\psline[linewidth=0.04cm,linecolor=blue,linestyle=dashed](2,3.5)(5,3.5)
\rput(1.7,3.7){$o_u$}
\put(3.6,3.6){$\eta\left(o\right)$ }
\end{pspicture}
}
\end{center}
 \caption{The orbit space $\orb$} \label{fig:R-covered_case}
\end{figure}

In this model, each point $o \in \mathcal{O}$ can be assigned a point $o_u$ on the upper boundary of the strip by following the unstable leaf through $o$, and a point $o_l$ on the lower boundary by following the stable leaf. Taking the intersection of the stable leaf through $o_u$ and unstable through $o_l$ defines a continuous, fixed point free map $\eta\colon \mathcal{O} \to \mathcal{O}$, that we call the {\em half-step up} map.\footnote{this map and its square have both been referred to as the one-step-up map elsewhere in the literature.  We choose to call the square the one-step-up and $\eta$ the half-step-up}  
This map exchanges stable leaves and unstable leaves, so $\tau = \eta^2$ induces a map on the leaf space $\Lambda^s$ of the weak stable foliation. We call $\tau$ the \emph{one step up map}.

If the weak foliations are transversely orientable, then $\tau$ commutes with the action of $\pi_1(M)$ on $\Lambda^s$.  Identifying $\Lambda^s \cong \R$ so that $\tau$ is identified with the translation $x \mapsto x+1$ realizes $\pi_1(M)$ as a subgroup of $\HomeoZ$, the group of orientation-preserving homeomorphisms of $\R$ commuting with integer translations.
In the non-orientable case, $\tau$ is twisted-$\pi_1$-equivariant: for any $\gamma \in \pi_1(M)$ that reverses the orientation of $\Lambda^s$ we have $\tau(\gamma x) = \gamma \tau^{-1}(x)$. 

Our perspective going forward will be to study the flows through the action of $\pi_1(M)$ on the leaf space.  
For simplicity, we assume first that the weak foliations are transversely orientable, and then state the necessary modifications for the non-orientable case in Section \ref{sec:nonorientable}.   In the orientable case, the dynamics of the action of $\pi_1(M)$ on $\Lambda^s$ is what Thurston \cite{slithering} calls an {\em extended convergence group action}.  However, the only dynamical property that we will need is the fact that in such a group, any orientation-preserving homeomorphism with fixed points has exactly two fixed points in $[0,1)$, one attracting and one repelling.  This can be seen directly from the Anosov dynamics of the flow.   

Another dynamical property that will be of use comes from Barbot \cite[Theorem 2.5]{Bar_caracterisation}.  Barbot's theorem states that skew-Anosov flows are transitive and the action of the group generated by $\pi_1(M)$ and $\tau$ on $\R$ is {\em minimal}, meaning that all orbits are dense.   Since $\R/\tau \cong S^1$ and the action of $\pi_1(M)$ descends to this circle, this latter statement is equivalent to the statement that the action of $\pi_1(M)$ on $\R/\tau$ is minimal.  The reader may find it useful to visualize the action on $\R$ by thinking of it on the circle $\R/\tau$, but for simplicity we will work in  $\HomeoZ$. 
The next subsection establishes some general results about such subgroups of $\HomeoZ$, concluding with the proof of Theorem \ref{thm:hyp_action}.    

\subsection{Hyperbolic-like homeomorphisms: Proof of Theorem \ref{thm:hyp_action}}

\begin{definition}
We say an element $f \in \HomeoZ$ is {\em hyperbolic-like} if it has exactly two fixed points in $[0,1)$, one attracting and one repelling, and a group action $\rho: G \to \Homeo(\bR)$ is hyperbolic-like if its image lies in $\HomeoZ$ and every element with fixed points is hyperbolic-like.  
\end{definition} 

In the context of the action of the fundamental group of a 3-manifold with a skew-Anosov flow on the stable leaf space (which is what we have in mind), Thurston \cite{slithering} calls hyperbolic-like elements {\em space-like} homeomorphisms.   Since this notation is not commonplace, we have chosen the terminology ``hyperbolic-like" since the induced action of such elements on $\bR/\bZ$ are topologically conjugate to hyperbolic M\"obius transformations.  

If $a$ is a hyperbolic-like element, we will use the notation $a_+$ and $a_-$ to denote an attracting, respectively, repelling, fixed point for $a$.  
For two hyperbolic-like elements $a$ and $b$, we say the fixed sets of $a$ and $b$ are {\em linked} if each connected component of $\R \smallsetminus \fix(a)$ contains a fixed point for $b$, or if $a$ and $b$ have fixed points in common.  We say they are {\em unlinked} otherwise.   By convention, when we speak about the order of fixed points on $\bR$, we use the notation 
\[ a_+ < b_- < a_- < b_+ \]
to mean that there exist four {\em consecutive} elements of $\fix(a) \cup \fix(b)$, ordered as indicated by the inequality.  In this example, the fixed sets of $a$ and $b$ are linked.   Since both $a$ and $b$ commute with integer translation, the next element of $\fix(a) \cup \fix(b)$ to the right of $b_+$ is another attracting fixed point for $a$, equal to $a_+ + 1$.

The next series of lemmas shows that the configuration of fixed points of a pair or of a triple of elements can be detected by the set of words in those elements which act with fixed points.

\begin{lemma} \label{lem:linked}
Let $a$ and $b$ be hyperbolic-like elements of $\HomeoZ$.  The fixed sets of $a$ and $b$ are linked if and only if $\fix(a^n b^m) \neq \emptyset$ for all $n, m \in \bZ$.  
\end{lemma}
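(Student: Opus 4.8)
The plan is to prove both directions by analyzing the induced dynamics on the circle $S^1 = \bR/\bZ$, where each hyperbolic-like element becomes a homeomorphism with exactly one attracting and one repelling fixed point (dynamically conjugate to a hyperbolic M\"obius map). For the forward direction, suppose the fixed sets of $a$ and $b$ are linked. If $a$ and $b$ share a fixed point $p$, then $p$ is fixed by every word $a^n b^m$, so we are done. Otherwise the four fixed points $a_+, a_-, b_+, b_-$ are cyclically ordered on $S^1$ in an alternating fashion. I will first treat $a^n b^m$ with $n, m$ of the appropriate signs so that the product is "North-South--like": e.g., if we arrange the alternating order so that the arc from $a_-$ to $b_+$ (not containing $a_+$ or $b_-$) maps into itself under a suitable power of $b$ and then under a suitable power of $a$, an intermediate value / Brouwer-type argument on that arc produces a fixed point. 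More conceptually, the cleanest route is: the closed arc $I$ between consecutive fixed points $a_+$ and $b_+$ that contains neither $a_-$ nor $b_-$ is mapped strictly inside itself by $a^{n}$ for $n$ of one sign (pushing toward $a_+$) and $I$ is an interval on which $b^{m}$ for $m$ of the corresponding sign also behaves controllably; composing, one finds $a^n b^m$ maps a closed interval in $\bR$ into itself, hence has a fixed point there. One then checks the remaining sign combinations of $(n,m)$ symmetrically (using that $a^{-1}, b^{-1}$ are again hyperbolic-like with attracting and repelling points swapped, and the linking condition is symmetric under these swaps), and the cases where $n=0$ or $m=0$ are trivial since $a, b$ themselves are hyperbolic-like.

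For the reverse direction, I will prove the contrapositive: if the fixed sets of $a$ and $b$ are unlinked (and share no fixed point), then some $a^n b^m$ is fixed-point-free. Unlinked means there is a connected component $J$ of $\bR \smallsetminus \fix(a)$ containing all of $\fix(b)$ (using $\bZ$-equivariance, one can locate a fundamental-domain-sized interval doing this), equivalently $\fix(b)$ lies in a single "lens" between consecutive fixed points of $a$. On the circle picture: both fixed points of $b$ lie in one of the two arcs determined by $\fix(a)$. Now choose $n$ with the right sign so that $a^n$ drags this arc $J$ far toward the attracting fixed point $a_+$ — in fact, so that $a^n(\overline{J'}) $ is a tiny neighborhood of $a_+$ disjoint from $\overline{J'}$, where $J'$ is a slightly larger interval still avoiding the other fixed points of $a$ — and then pick $m$ with the right sign so that $b^m$ maps $\bR \smallsetminus (\text{nbhd of } b_-)$ into a small neighborhood of $b_+ \in J \subset J'$. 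For $|m|$ large, $b^m$ maps the complement of a tiny interval around $b_-$ into $J'$, and then $a^n$ maps $J'$ off of itself and away from $b_-$; arranging the estimates so the tiny exceptional interval around $b_-$ is itself swallowed, one gets that $a^n b^m$ moves every point, i.e.\ is fixed-point-free. Making the three small neighborhoods compatible is the one place requiring care.

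The main obstacle I anticipate is the bookkeeping of signs and the uniform (in the $\bZ$-direction) choice of intervals: because we work in $\HomeoZ$ rather than on the compact circle, "the fixed set" is $\bZ$-periodic and infinite, so statements like "$a^n b^m$ has a fixed point" must be read as "has a fixed point somewhere in $\bR$," and the linked/unlinked dichotomy must be phrased consistently with the periodicity (this is exactly why the paper's convention $a_+ < b_- < a_- < b_+$ with "consecutive" elements is set up in advance). I expect the argument is cleanest if I pass to $S^1 = \bR/\bZ$ at the outset, prove the statement there for the pair of circle homeomorphisms $\bar a, \bar b$ with their single attracting/repelling pair each, and then observe that a word has a fixed point on $\bR$ if and only if its image has a fixed point on $S^1$ — since a lift of a circle map with a fixed point, once normalized to have rotation number $0$ (which holds here because $a^nb^m \in \HomeoZ$ already commutes with the integer translation and the circle map has a fixed point), has a fixed point. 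With that reduction, both directions become the standard North–South-dynamics arguments sketched above.
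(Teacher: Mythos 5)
There is a genuine gap, and it sits exactly at the point your proposal leans on hardest: the reduction to $S^1=\bR/\bZ$. Your closing claim that ``a word has a fixed point on $\bR$ if and only if its image has a fixed point on $S^1$'' is false, and the attempted justification is vacuous: \emph{every} element of $\HomeoZ$ commutes with integer translations, so this does not single out a lift with translation number $0$; the translation number of the particular lift $a^nb^m$ is precisely the quantity at stake. The unlinked case is exactly the situation where the circle map $\bar a^n\bar b^m$ \emph{does} have fixed points (it is North--South-like, with attractor near the image of $a_+$), while the lift $a^nb^m$ is fixed-point free because points near $a_+$ are sent near $a_+{+}1$, i.e.\ the lift has integer translation number $1$ rather than $0$. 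Concretely, with consecutive fixed points ordered $a_+<a_-<b_+<b_-$ and $n,m\gg 0$, a point $x$ near $a_+$ satisfies $b^m(x)$ near $b_+$ (since $b_-{-}1<a_+<b_+$) and then $a^n b^m(x)$ near $a_+{+}1$ (since $b_+$ lies in the basin $(a_-,a_+{+}1)$); on the circle this produces a fixed point near $a_+$, so your reverse-direction argument --- which tries to show the circle map ``moves every point'' --- is attempting to prove something false. The paper's proof avoids this by never leaving $\bR$: it tracks the single point $a_+$ and shows $(a^nb^m)^2$ translates it by at least $1$, whence $a^nb^m$ is fixed-point free as an element of $\HomeoZ$. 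Your sketch also has the more local flaw that the step ``$a^n$ maps $J'$ off itself'' only rules out fixed points for $x\in J'$; points $x$ in the target neighborhood of $a_+$ are not excluded, and indeed that is where the circle fixed point lives.

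The forward direction of your proposal is essentially the paper's argument and is fine \emph{provided} it is carried out on the line: your ``cleanest route'' arc between $b_+$ and $a_+$ containing neither repelling point corresponds to the interval $[b_+,a_+{+}1]\subset\bR$ (or $[a_+,b_+]$ in the other alternating order), which is mapped into itself by $a^nb^m$ for $n,m\ge 0$, giving a genuine fixed point of the lift; the reduction of general sign patterns via passing to inverses matches the paper. (Your first suggested arc, from $a_-$ to $b_+$, is not invariant and should be discarded.) But as written, both directions are routed through the invalid $\bR\leftrightarrow S^1$ equivalence, and the reverse direction cannot be repaired within that framework: you must work with lifts on $\bR$ and detect the integer translation, as the paper does.
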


\begin{proof} Suppose first that the fixed sets of $a$ and $b$ are linked, and let $n, m$ be given.  Since the property of having linked fixed sets does not change after passing to inverses, up to replacing $a$ or $b$ with their inverses we can assume that $n, m \geq 0$.  Since the fixed sets of $a$ and $b$ are linked, either $a$ and $b$ have a common fixed point (in which case we are done) or there exists some connected component of $\R \smallsetminus (\fix(a) \cup \fix(b))$ bounded on one side by an attracting fixed point for $a$, and on the other by an attracting fixed point for $b$.  Let $I$ denote the closure of this component.  Then $a^n b^m(I) \subset I$, so $a^n b^m$ has a fixed point in $I$.  

To prove the converse, suppose now that the sets are unlinked.  Up to passing to inverses, we may find consecutive attracting and repelling fixed points for $a$ and $b$ that lie in the order 
\[ a_+ < a_- < b_+ < b_- \] 
with no other fixed points between $a_+$ and $b_-$.  For $m$ large enough, $b^m(a_+)$ will lie in the open interval $(a_-, b_+)$.  For $n$ large enough, $a^nb^m(a_+)$ will therefore lie in the open interval $(b_-, a_+ + 1)$.  Similarly,  $b^ma^nb^m(a_+)$ will lie to the right of $a_+ + 1$, as will $a^nb^ma^nb^m(a_+)$.  Thus, $(a^n b^m)^2$ translates some point a distance at least 1.  Any such element of $\HomeoZ$ is fixed point free, hence its root $a^n b^m$ is fixed point free as well.  
\end{proof} 

The next lemma says that if $a$ and $b$ have unlinked fixed sets, then we can detect the cyclic order of attracting and repelling fixed points by understanding which words in $a$ and $b$ have fixed points.  
\begin{lemma} \label{lem:unlinked}
Suppose $a$ and $b$ have unlinked fixed sets. The word $b^Na^N$ has a fixed point for every $N>0$ if and only if one may find a set of consecutive fixed points either in the order
\[ a_+ < a_- < b_- < b_+ \] 
or that obtained from the above by replacing $a$ and $b$ simultaneously with their inverses.
\end{lemma}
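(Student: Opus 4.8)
The plan is to analyze the dynamics of the word $b^N a^N$ on $\bR$ directly, using the fact that $a$ and $b$ are hyperbolic-like with unlinked fixed sets, and show that the existence of a fixed point for all $N$ forces a specific cyclic order. First I would fix notation: since the fixed sets are unlinked, after passing to a fundamental domain and possibly translating, all the fixed points of $a$ in $[0,1)$ together with all those of $b$ in $[0,1)$ lie in a configuration where the pair $\{a_+, a_-\}$ and the pair $\{b_+, b_-\}$ do not interleave. Up to replacing both $a$ and $b$ by their inverses simultaneously (which is exactly the symmetry appearing in the statement, and which does not change whether $b^N a^N$ has fixed points since it only conjugates/reverses), I may assume that between a chosen attracting fixed point $a_+$ and the next fixed point of $a$ to its right (which is $a_- $, a repelling point), there are no fixed points of $b$; i.e. the four consecutive fixed points of $a$ and $b$ starting from $a_+$ are $a_+ < a_- < ?\ < ?$ where the last two are $b$'s fixed points in some order, either $b_- < b_+$ or $b_+ < b_-$. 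The claim is that $b^N a^N$ has a fixed point for all $N>0$ exactly in the first case.

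For the forward/easy implication, suppose the order is $a_+ < a_- < b_- < b_+$ with nothing else between $a_+$ and $b_+$. Consider the interval $I = [a_+, b_+]$. I claim $b^N a^N(I) \subseteq I$. Indeed $a^N$ fixes $a_+$ and $b_+$ lies in the basin of the attracting point $a_+ + 1$ past $a_-$... here I need to be careful about which direction $a$ pushes; the cleaner route is to track the endpoint dynamics: $a^N$ maps $[a_+, a_-]$ into itself fixing $a_+$, and maps points just right of $a_-$ toward $a_+ + 1$. Rather than $I$ itself, the robust argument is the one used in Lemma~\ref{lem:linked}: find a closed interval $J$ with one endpoint an attracting fixed point of $a$ and the other an attracting fixed point of $b$, lying in a single complementary component, such that $b^N a^N(J) \subseteq J$; the unlinked order $a_+ < a_- < b_- < b_+$ guarantees such a $J$ exists (take $J = [a_-', b_+]$ reading fixed points the other way, or work with inverses), giving a fixed point of $b^N a^N$ in $J$ by the intermediate value theorem, for every $N$.

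For the converse, suppose the cyclic order is the other one, $a_+ < a_- < b_+ < b_-$ with nothing between $a_+$ and $b_-$. I would show $(b^N a^N)^2$ translates some point by at least $1$ for $N$ large, hence $b^N a^N$ is fixed-point free for large $N$, and then upgrade to all $N>0$. The mechanism mirrors the converse in Lemma~\ref{lem:linked}: starting at $a_+$, apply $a^N$ to land near $a_-$ from the left (fixed, essentially stationary), then a second point — the point is that the relevant "ping-pong" is between the gap $(a_-, b_+)$ controlled by $b$ and the gap beyond $b_-$; for $N$ large $a^N$ pushes points in $(a_-, b_+)$ across $b_+$, and $b^N$ then pushes them across $b_-$ toward $b_+ + 1$, so a composition moves a point past a full period. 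One must handle the case where $b^N a^N$ has a fixed point for small $N$ but not large $N$: since $\HomeoZ$ elements with rotation number $0$ have fixed points and translation-by-$\geq 1$ rules rotation number out, and since the rotation number of $b^N a^N$ is monotone in $N$ in an appropriate sense — actually the clean statement is simply that an element of $\HomeoZ$ whose square translates some point by $\geq 1$ is fixed-point free, applied for each large $N$, combined with the observation that if $b^N a^N$ has a fixed point for some $N$ then the linked/unlinked dichotomy and a monotonicity argument forces it for all smaller $N$; alternatively, I expect the statement should be read as ``for every $N>0$'', and the honest proof shows both the existence for all $N$ in the good order and the failure for large $N$ (hence not for all $N$) in the bad order, which suffices for the iff.

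The main obstacle will be making the ping-pong estimate in the converse fully rigorous while tracking signs: ``attracting'' and ``repelling'' for $a$ and $b$ interact with the left/right direction of pushing, and after the allowed simultaneous inversion there is still a genuine choice of which of $b_-, b_+$ comes first, so one must verify that exactly one of the two orders yields the translation-by-$\geq 1$ behavior and the other yields the invariant interval. Keeping the bookkeeping of these four consecutive fixed points straight — and confirming that the ``replace $a$ and $b$ simultaneously by inverses'' symmetry accounts for precisely the ambiguity left after choosing a fundamental domain — is the delicate point; the rest is a routine adaptation of the two arguments already given in the proof of Lemma~\ref{lem:linked}.
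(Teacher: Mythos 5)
Your proposal is correct and follows essentially the same route as the paper: for the ``good'' order one uses an invariant interval bounded by an attracting fixed point of $b$ and one of $a$ lying in a single complementary component (the paper takes $[b_+, a_+ +1]$, which is exactly what your abstract criterion picks out --- note your explicit candidates $[a_+,b_+]$ and $[a_-',b_+]$ are not the right interval), and for the ``bad'' orders one shows $(b^Na^N)^2$ translates a point (the paper uses $b_+$) by more than $1$ for large $N$, which suffices since the hypothesis quantifies over all $N>0$. The only cosmetic difference is that the paper treats the two bad orders separately rather than normalizing by simultaneous inversion as you do, and your reduction is legitimate since $b^{-N}a^{-N}$ is the inverse of a conjugate of $b^Na^N$.
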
 

\begin{proof} 
If the ordering shown above occurs, then the interval $[b_+, a_+ +1]$ is mapped into itself by $b^Na^N$, so the map has a fixed point.  
Note that the ordering obtained by replacing $a$ and $b$ with their inverses can also be obtained simply by reversing the orientation of $\bR$ and considering a sequence of consecutive fixed points starting with $a_-$.  Since reversing orientation of $\bR$ obviously does not change the property of a map having fixed points, we have already proved one direction of the lemma.   
For the converse, if the ordering above does not occur even after passing to inverses, then we have either the order
\[ a_+ < a_- < b_+ < b_- \] 
or 
\[ a_- < a_+ < b_- < b_+ .\] 
In the first case, for sufficiently large $N$ we have $(b^Na^N)^2(b_+) > b_+ + 1$ so the map is fixed point free, and in the second case we have $(b^Na^N)^2(b_+) < b_+ - 1$ so the map is again fixed point free.  
\end{proof}

Our next goal is to use this information to reconstruct a minimal action, up to conjugacy, from the data of the ordering of fixed point sets.   For this we need an elementary lemma. 

\begin{lemma} \label{lem:dense}
Suppose $G \subset \HomeoZ$ is a nonabelian group whose action on $\bR$ is minimal and hyperbolic-like.  Then for any point $x \in \R$ and any $\epsilon > 0$, there exists $a \in G$ such that $a$ has two fixed points in the $\epsilon$-neighborhood of $x$.   
\end{lemma}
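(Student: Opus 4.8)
The plan is to pass immediately to the circle $S^1 = \bR/\bZ$: for a subgroup of $\HomeoZ$, ``acting minimally'' means precisely that all $G$-orbits on $S^1$ are dense, and there a hyperbolic-like element $a$ has exactly two fixed points, an attractor $p$ and a repeller $q$, with $a^m(y)\to p$ for every $y\neq q$. Since a hyperbolic-like element of $G$ both of whose $S^1$-fixed points lie in a short arc around $\bar x$ lifts to an element of $\HomeoZ$ with two distinct fixed points in $(x-\eps,x+\eps)$ (taking $\eps<1/2$, which is harmless), it suffices to produce such a circle element of $G$.

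First I would observe that $G$ contains a hyperbolic-like element at all: otherwise, by hypothesis no nontrivial element of $G$ has a fixed point on $\bR$, so $G$ acts freely on $\bR$ by orientation-preserving homeomorphisms and is therefore abelian by H\"older's theorem, contradicting nonabelianness. Fix such an $a$, with attractor $p$ and repeller $q$. The key step is then to produce a \emph{second} hyperbolic-like element $b\in G$ with $q\notin\fix(b)$. This is exactly where minimality does the work: conjugating by powers of $a$ cannot shrink $\fix(a)$ (each $a^m$ fixes it setwise), so I need a hyperbolic-like element whose fixed set already lies in the basin $S^1\smallsetminus\{q\}$. Since the orbit of $q$ is dense, it is infinite, hence not contained in the two-point set $\fix(a)=\{p,q\}$; choose $f\in G$ with $f(q)\notin\fix(a)$ and set $b:=f^{-1}af$. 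Then $b\in G$ is nontrivial with a fixed point, hence hyperbolic-like, and $\fix(b)=f^{-1}(\fix(a))$ does not contain $q$. (Equivalently: if every conjugate of $a$ fixed $q$, the orbit of $q$ would be trapped inside $\fix(a)$.)

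Everything after this is routine assembly. Writing $\fix(b)=\{u,v\}$ with $u,v\neq q$, we get $a^m(u)\to p$ and $a^m(v)\to p$, so the hyperbolic-like elements $b_m:=a^m b a^{-m}\in G$ have fixed sets $\fix(b_m)=\{a^m(u),a^m(v)\}$ concentrating at $p$: for every $\rho>0$ there is $m$ with both fixed points of $b_m$ in the $\rho$-arc about $p$. Finally, given $\bar x$ and $\eps$, minimality provides $h\in G$ with $h(p)$ within $\eps/2$ of $\bar x$; uniform continuity of $h$ furnishes $\rho>0$ so that $h$ carries the $\rho$-arc about $p$ into the $(\eps/2)$-arc about $h(p)$; and then, taking $m$ accordingly, the element $c:=h b_m h^{-1}\in G$ is hyperbolic-like with $\fix(c)=h(\fix(b_m))$ contained in the $\eps$-arc about $\bar x$. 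Lifting $c$ back to $\HomeoZ$ produces the required element with two fixed points in the $\eps$-neighborhood of $x$.

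The one genuinely delicate point is the selection of $b$ in the second paragraph: manufacturing a hyperbolic-like element whose fixed set avoids the repeller of $a$, so that conjugation by high powers of $a$ truly contracts it to a point rather than merely sliding it around the circle. Once that is in hand, the rest is North--South dynamics together with density-and-continuity bookkeeping. (If one wished to avoid invoking H\"older's theorem in the first paragraph, one could instead note that a nonabelian minimal action on $S^1$ is not conjugate to an action by rotations and hence must contain elements with fixed points, but the H\"older route is shorter.)
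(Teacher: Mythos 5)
Your proof is correct and is essentially the paper's argument: both use H\"older's theorem to produce a hyperbolic-like element, then minimality to find a conjugate whose fixed set avoids the repelling point, then conjugation by high powers to contract that fixed set toward the attractor. The only differences are cosmetic bookkeeping --- you work on $S^1$ and position the fixed points near $x$ at the end by a final conjugation-plus-continuity step (which is legitimate, since your element $c$ lies in $G\subset\HomeoZ$ and is conjugate to a hyperbolic-like element, so its $\bZ$-periodic fixed set on $\R$ genuinely has two points per period near $x$), whereas the paper starts with an element whose attracting fixed point is already within $\eps/2$ of $x$ and obtains the auxiliary element by a small perturbing conjugation rather than via the density of the orbit of the repeller.
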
 

\begin{proof} 
If no nontrivial element of $G$ acts with fixed points, then $G$ would be abelian, by H\"older's theorem, so by assumption this case does not occur.   Thus, there exist hyperbolic-like elements, and by minimality of the action of $G$, the set of their attracting fixed points is dense.

Let $x$ and $\epsilon >0$ be given.    
Fix any hyperbolic-like element $g$ with an attracting fixed point in the $\epsilon/2$ neighborhood of $x$. 
Observe that, if $f$ and $g$ are hyperbolic-like, and $f$ does not fix a repelling point $g_-$ for $g$, then all fixed points of the conjugate $g^N f g^{-N}$ approach the attracting fixed points of $g$ as $N \to \infty$.   Thus, it suffices to find a hyperbolic-like $f$ that does not fix $g_-$.   By minimality, there exists $h \in G$ such that $h(g_-)$ lies strictly between $g_-$ and $g_+$, where $g_- < g_+$ are consecutive fixed points.   If $h(g_+ - 1) \neq g_-$, then $hgh^{-1}$ is hyperbolic-like and has fixed points distinct from $g_-$, since they are the images of the fixed points of $g$ under $h$ and we are done.   If instead $h(g_+ -1) = g_-$, or equivalently $h(g_+) = g_-+1$, then we have
\[g_- <  h^{-1}(g_+) < g_+  < g_- +1\] 
and therefore $h^{-1}gh$ has fixed points $g_+$ and $h^{-1}(g_+) \neq g_-$.
\end{proof} 

Although not strictly needed in our proof, Lemma \ref{lem:dense} can be strengthened to the following density for pairs of fixed points.  
\begin{lemma} \label{lem:pairs_dense}
Suppose $G \subset \HomeoZ$ is a nonabelian group whose action on $\bR$ is minimal and hyperbolic-like.  
Given $x, y \in [0,1)$, and $\epsilon >0$, there exists a hyperbolic-like element $g \in G$ with fixed points satisfying $|g_- - x| < \epsilon$ and $|g_+ - y| < \epsilon$. 
\end{lemma}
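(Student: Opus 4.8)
The plan is to pass to the circle $S^1 = \R/\bZ$, on which $G$ acts by orientation-preserving homeomorphisms and on which a hyperbolic-like element $g$ exhibits North--South dynamics: it has exactly two fixed points, an attractor $g_+$ and a repeller $g_-$, and $S^1\setminus\{g_-\}$ is the basin of $g_+$, with forward iterates converging to $g_+$ uniformly on compact sets (and symmetrically, backward iterates converging to $g_-$). Write $\bar x,\bar y\in S^1$ for the projections of $x,y$; the case $\bar x=\bar y$ is immediate from Lemma \ref{lem:dense}, so assume $\bar x\neq\bar y$, set $d=\dist_{S^1}(\bar x,\bar y)>0$, and (harmlessly) assume $\epsilon<1/2$. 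The element we produce is a product $g=b^Na^N$ for $N$ large, where $a,b\in G$ are hyperbolic-like with all their fixed points very close to $\bar x$ and to $\bar y$ respectively. The intuition is that $a^N$ first collapses almost all of $S^1$ onto a small neighborhood of $\bar x$, and then $b^N$ carries that neighborhood onto a small neighborhood of $\bar y$, so the resulting map has its repeller trapped near $\bar x$ and its attractor near $\bar y$.

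Concretely, I would fix $\delta>0$ with $\delta<\tfrac1{10}\min(\epsilon,d)$, invoke Lemma \ref{lem:dense} to produce hyperbolic-like $a,b\in G$ with every fixed point of $a$ within $\delta$ of $\bar x$ and every fixed point of $b$ within $\delta$ of $\bar y$ (legitimate, since a hyperbolic-like element has exactly two fixed points on $S^1$), and set $K=S^1\setminus B_{3\delta}(\bar x)$, a proper closed arc. Since $a_-\notin K$, the arc $K$ is a compact subset of the basin of $a_+$, so $a^N(K)\subset B_{2\delta}(\bar x)$ once $N$ is large; since $b_-$ lies at distance at least $d-3\delta>0$ from $\overline{B_{2\delta}(\bar x)}$, that compact arc lies in the basin of $b_+$, so $b^N\big(\overline{B_{2\delta}(\bar x)}\big)\subset B_{2\delta}(\bar y)$ once $N$ is large. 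Fixing one such $N$, the element $g=b^Na^N\in G$ satisfies $g(K)\subset B_{2\delta}(\bar y)$, and since $d>5\delta$ we have $B_{2\delta}(\bar y)\subset K$; hence $g$ maps the closed arc $K$ into itself, so it has a fixed point, and $g$ is nontrivial because it collapses $K$ onto a strictly shorter arc. By the standing hypothesis on $G$, $g$ is therefore hyperbolic-like.

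It remains to locate $g_\pm$. The fixed point of $g$ found in $K$ lies in $g(K)\subset B_{2\delta}(\bar y)$. On the other hand $g\big(B_{2\delta}(\bar y)\big)\subset g(K)\subset B_{2\delta}(\bar y)$, so $g^{-1}$ maps the complementary closed arc $L=S^1\setminus B_{2\delta}(\bar y)$ into itself and thus has a fixed point in $L$; this is a fixed point of $g$ lying outside $B_{2\delta}(\bar y)$, hence --- as every fixed point of $g$ in $K$ lies in $B_{2\delta}(\bar y)$ --- lying outside $K$, i.e.\ in $B_{3\delta}(\bar x)$. These two fixed points are distinct, so, $g$ being hyperbolic-like, they are exactly $g_+$ and $g_-$. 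To see which is the attractor, observe that $\bigcap_{n\geq1}g^n(K)$ is a nonempty nested intersection of closed arcs, contained in $B_{2\delta}(\bar y)$ and invariant under $g$; a nondegenerate invariant arc would have both of its (distinct) endpoints fixed by $g$, impossible since $B_{2\delta}(\bar y)$ contains only one fixed point, so this intersection is a single fixed point of $g$ that attracts the neighborhood $K$ of it, hence equals $g_+$. Therefore $\dist(g_+,\bar y)<2\delta<\epsilon$ and $\dist(g_-,\bar x)<3\delta<\epsilon$, and lifting back to $\R$ (taking the representatives of $g_+,g_-$ nearest to $y$ and $x$, which is unambiguous as $\epsilon<1/2$) completes the proof.

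Beyond the routine ping-pong estimates, the one genuinely substantive step is the last paragraph: it is clear that the attracting fixed point of $g$ ends up near $\bar y$, but trapping the repelling one near $\bar x$ requires the observation that neither $g$ nor $g^{-1}$ can have a fixed point in the complementary arc $L$. Here it is essential that $G$ consists only of the particular elements it does --- there is no freedom to conjugate or translate $g$ into a normal form, since $G$ need not contain the translation $x\mapsto x+1$.
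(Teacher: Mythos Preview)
Your proof is correct and follows essentially the same strategy as the paper: produce $g=b^Na^N$ with $a,b$ hyperbolic-like having all fixed points near $x$ and $y$ respectively, and use North--South dynamics to locate $g_\pm$. The paper's version invokes Lemma~\ref{lem:unlinked} (with the specific ordering $a_+<a_-<b_-<b_+$) to guarantee the fixed point and then argues more tersely, whereas you bypass that lemma and carry out the interval-trapping argument $g(K)\subset K$ directly on $S^1$, which makes your proof a bit more self-contained.
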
 

\begin{proof} 
Let $x, y \in [0,1)$, and $\epsilon >0$ be given.  Without loss of generality, assume $x < y$ and assume that $\epsilon$ is small enough so that the $\epsilon$-neighborhoods of $x$ and $y$ and all of their integer translates are pairwise disjoint.  
By lemma \ref{lem:dense} we may find hyperbolic-like elements $a$ and $b$ with fixed points in the $\epsilon/2$-neighborhoods of $x$ and $y$ respectively.   
Replacing $a$ or $b$ with their inverses if needed, we can assume these fixed points are ordered
\[ a_+ < a_- < b_- < b_+ \] 
By Lemma \ref{lem:unlinked}, this implies that $b^Na^N$ has a fixed point for every $N>0$.  Furthermore, if $N$ is sufficiently large, an attracting fixed point for $b^Na^N$ will lie within the $\epsilon/2$ neighborhood of $b_+$, and a repelling fixed point within the $\epsilon/2$ neighborhood of $a_-$, this is simply because $b^Na^N$ takes a complement of the $\epsilon/2$-neighborhood of the union of translates of $a_-$ to a neighborhood of the attracting fixed points for $b$.   
\end{proof}

We can now finish the proof of our first main theorem; first we recall the statement.  

\hypthm*

\begin{proof} 
Let $G$ be nonabelian, hyperbolic-like and acting minimally on $\bR$, and $\rho(G)$ another such faithful action of $G$ on $\bR$, with the same set of hyperbolic-like elements as $G$.  Let $g \in G$ be a hyperbolic-like element (recall that such an element exists by H\"older's theorem since $G$ is nonabelian).  Choose coordinates on $\R$ so that the attracting fixed points of $g$ and of $\rho(g)$ are precisely the integers.  
We need to choose an orientation on the line $\bR$ on which $\rho$ acts. To do this take some $f \in G$ with an axis unlinked with $g$.  Such a map $f$ exists by Lemma \ref{lem:dense}.  Replacing $f$ with its inverse if needed, we can find such a map so that consecutive fixed points are ordered
\[ g_+ < g_- < f_- < f_+ .\] 
By Lemma \ref{lem:linked} and the fact the action of $\rho(G)$ has the same elements with fixed points as the original action of $G$, we conclude that the fixed sets of $\rho(g)$ and $\rho(f)$ are unlinked.  By the same reasoning, using Lemma \ref{lem:unlinked}, the map $f^Ng^N$ has a fixed point for all $N>0$, so by hypothesis the same is true for $\rho(f)^N \rho(g)^N$.  Applying the other direction of Lemma \ref{lem:unlinked} we can now fix an orientation on $\R$ so that consecutive fixed points of $\rho(f)$ and $\rho(g)$ are ordered 
\[ \rho(g)_+ < \rho(g)_- < \rho(f)_- < \rho(f)_+ .\] 
Our next goal is to show that this determines the ordering of the set of all attracting fixed points of hyperbolic-like elements of $\rho(G)$.  We then define a map $\Theta$ on the (dense) subset of $\R$ consisting of attracting fixed points of other elements by sending the unique attracting fixed point of an element $h$ that lies in $[m, m+1)$ to the unique attracting fixed point of $\rho(h)$ in $[m, m+1)$.   
This order-preserving property is sufficient to show that our map $\Theta$ is continuous, from which it will easily follow that it can be extended continuously to a homeomorphism of $\R$ that conjugates the action of $G$ and $\rho(G)$.    

Consider first an element $h$ with fixed set that is unlinked with both $f$ and $g$. Up to switching $h$ with $h^{-1}$, there are three cases to consider.\\

\noindent\textbf{Case 1.} Suppose first that we have the ordering
\[ g_+ < g_- < f_- < h_- < h_+  < f_+ .\] 
Applying Lemma \ref{lem:unlinked}, we have that $f^Nh^N$ and $g^Nh^N$ have fixed points for all positive $N$, hence so does $\rho(f)^N \rho(h)^N$ and $\rho(g)^N \rho(h)^N$. Applying the lemma again implies that one of three orderings occur, either
\[ \tag{$\ast$} \label{eq_star}  \rho(g)_+  < \rho(h)_+ < \rho(h)_-  < \rho(g)_- < \rho(f)_- < \rho(f)_+, \]
\[ \tag{$\ast \ast$} \label{eq_starstar}  \rho(h)_+  < \rho(g)_+ < \rho(g)_-  < \rho(h)_- < \rho(f)_- < \rho(f)_+, \]
or 
\[ \tag{$\ast \ast \ast$} \label{eq_starstarstar} \rho(g)_+ < \rho(g)_- < \rho(f)_-  < \rho(h)_- < \rho(h)_+  < \rho(f)_+ .\]
 We want to show that only case \eqref{eq_starstarstar} can occur.   We will show that  \eqref{eq_star} does not occur.  Eliminating the possibility of \eqref{eq_starstar} is done by exactly the same argument, swtiching the roles of $g$ and $h$; we omit the details.  

Since $\fix(aba^{-1}) = a \fix(b)$, for any $n>0$, we also have 
\[ g_+ < g_- < f_- < (f^n h f^{-n})_- < (f^n h f^{-n})_+  < f_+\]
and so $g^N (f^nhf^{-n})^N$ has a fixed point for all $N >0$, as does its image under $\rho$.    
If ordering \eqref{eq_star} were to occur, then for sufficiently large $n$ we would have 
\[  \rho(g)_+  < \rho(g)_- < \rho(f)_- < \rho(f)_+  < \rho(f^n h f^{-n})_+ < \rho(f^n h f^{-n})_-  \] 
contradicting Lemma \ref{lem:unlinked} applied to $\rho(g)^N \rho(f^nhf^{-n})^N$.  Thus, the ordering of consecutive fixed points of $f, g$ and $h$ under $\rho$ agrees with that for the original action. 
\smallskip

\noindent \textbf{Case 2.} The ordering
\[ g_+ < h_- < h_+ < g_- < f_- < f_+ .\] 
is handled exactly as above, exchanging the roles of $f$ and $g$.
\smallskip

\noindent \textbf{Case 3.} 
Now, suppose instead that the ordering of the fixed points of $f$, $g$ and $h$ is
\[
  g_+ < g_- < h_+ < h_- < f_- < f_+ .
\]
Consider the elements $a= g^nf^n$ and $b= f^nh^n$ for some large positive $n$. As $n \to \infty$ the attracting fixed point $a_+$ approaches $f_+$, and similarly $a_-$ approaches $g_-$,  $b_+$ approaches $h_+$, and $b_-$ approaches $f_-$.  Thus, provided $n$ is chosen large enough, we have 
\[
 g_+ < g_- < a_- <b_+ < h_+ < h_-< b_- < f_- < f_+ < a_+. 
\]
We can then apply the previous cases to the triples $(g,a,h)$, $(g,a,f)$, $(g,a,b)$, and $(g,b,h)$ to show the ordering of their fixed points is each preserved by $\rho$.  We deduce that the ordering of the fixed points of $\rho(f)$, $\rho(g)$ and $\rho(h)$ matches that of the fixed points of $f$, $g$, and $h$.

We can now quickly finish the proof of the Theorem.  
Suppose we have some hyperbolic-like $a$ and $b$ with 
\[0 \leq a_+ < b_+ < 1.\]

Rather than consider cases depending on whether $a$ and $b$ are linked or not, we can instead use Lemma \ref{lem:dense} to choose $c$ and $d$ in $G$ with fixed sets very close to $a_+$ and $b_+$ such that $c$ and $d$ have fixed sets unlinked with $a$ and $b$. 
Suppose for simplicity that $0 < a_+$ (otherwise, simply replace 0 in what follows with some very small $-\epsilon$ and $1$ with $1-\epsilon$, and repeat the proof). By Lemma \ref{lem:pairs_dense}, we can choose elements $c$ and $d$ with fixed points so that we have the ordering
\[ 
0 < c_- < c_+ < a_+ <  d_- < d_+ < b_+ < 1
\]
and additionally have that $a$ does not have a repelling fixed point between $c_-$ and $a_+$, and  $b$ does not have a repelling fixed point between $d_-$ and $d_+$.   We can also choose such $c$ and $d$ so that fixed sets are each unlinked with respect to $f$ and to $g$, so that we may apply the observation above to $c$ and $d$ and determine the relative order of their fixed point sets.  Any choice of $c$ and $d$ with fixed sets sufficiently close to $a_+$ and $b_+$ will have this property.   Thus, by our convention on $\rho(g)$, we conclude that 
\[\rho(g)_+ = 0 < \rho(c)_- < \rho(c)_+ < \rho(d)_- < \rho(d)_+  < 1.\]
Since $c$ and $d$ had unlinked fixed points with respect to $a$ and $b$ and $g$, we can apply the observation again and conclude that the ordering of fixed sets is preserved, namely 
\[\rho(g)_+ = 0 < \rho(c)_- < \rho(c)_+ < \rho(a)_+ < \rho(d)_- < \rho(d)_+ < \rho(b)_+ < 1\]
and in particular, $\rho(g)_+ = 0 < \rho(a)_+ < \rho(b)_+ < 1$ as we needed to show.  

Thus, we have defined an order-preserving (and hence continuous) injective map between two dense subsets of $\R$. This extends uniquely to a continuous map $\R \to \R$ with continuous inverse, which we denote by $\Theta$.  
It remains to see that $\Theta$ conjugates the actions of $G$ and $\rho(G)$.  
Let $g \in G$ be given.  Note that, if $a_+$ is a fixed point of a hyperbolic-like element of $a \in G$, then $g(a_+)$ is an attracting fixed point of $(gag^{-1})$, thus $\Theta(ga_+)$ is some attracting fixed point of $\rho(gag^{-1})$, i.e.~the image of an attracting fixed point of $\rho(a)$ under $\rho(g)$.  In other words, $\Theta(ga_+) = \rho(g) \Theta(a_+) + n$ for some $n \in \mathbb{Z}$.  Since $\Theta$ is continuous and fixed points of hyperbolic-like elements are dense in the source and the target, we conclude that $n=0$, and $g$-equivariance holds on a dense set, hence everywhere.
\end{proof} 

\begin{rem}
If $G$ is not assumed to act minimally (but $\rho(G)$ is), the same proof strategy can be used to produce a {\em semi-conjugacy} between the actions of $G$ and $\rho(G)$, defined on the closure of the $G$-invariant set consisting of hyperbolic fixed points.  
\end{rem}

\subsection{Conclusion of the proof of Theorem \ref{thm_main}} 
Returning to the set-up of Theorem \ref{thm_main}, suppose that $\varphi$ and $\psi$ are two skew-Anosov flows with $f_\ast \cP(\varphi) = \cP(\psi)$, for some homeomorphism $f\colon M \to M$.  Replacing $\varphi$ with its conjugate under $f$, we obtain a flow $\varphi$ satisfying $\cP(\varphi) = \cP(\psi)$.   What we need to show is that $\varphi$ and $\psi$ are orbit equivalent by some homeomorphism of $M$ that is isotopic to the identity.  

\subsubsection*{Orientable case}
Assume first that $\Lambda^s(\varphi)$ and  $\Lambda^s(\psi)$ are both transversely orientable. 
We then will apply Theorem \ref{thm:hyp_action} to the group $G := \pi_1(M)$. Recall the action of this group on the leaf space $\Lambda^s(\varphi) \cong \R$ is faithful, minimal and has the property that all elements with fixed points are hyperbolic-like, under the parametrization of $\R$ where $\tau(\varphi)$ acts as translation by 1, and the same holds for $\psi$.

That $\cP(\varphi) = \cP(\psi)$ means precisely that these two representations have the same elements with fixed points. 
Thus, Theorem \ref{thm:hyp_action} implies that these two actions are conjugate.  
This also gives a conjugacy between the actions on the unstable leaf spaces $\Lambda^u(\varphi)$ and $\Lambda^u(\psi)$ via further conjugation by the half step up map $\eta$.  Considering intersections of stable and unstable leaves, we can promote this to a conjugacy of the actions of $\pi_1(M)$ on the orbit space $\mathcal{O}$. Following an argument of Ghys using Haefliger's theory of classifying spaces of foliations, Barbot showed using an averaging trick that such a conjugacy on the orbit space can always be realized by a homeomorphism of $M$ giving an orbit-equivalence of the flows \cite[Theorem 3.4]{Bar_caracterisation}. 
In our case, this homeomorphism  is easily seen to be isotopic to the identity by considering the action on $\pi_1$.
This concludes the proof in the transversely orientable case.

\subsubsection*{General case.} \label{sec:nonorientable}
For the general case, consider again the action of $\pi_1(M)$ on $\Lambda^s(\varphi) \cong \R$ and on $\Lambda^s(\psi)$.  Let $G \subset \pi_1(M)$ be the normal subgroup generated by all squares of elements. Since each element of $G$ is a product of squares, its action on $\Lambda^s(\varphi)$ and  $\Lambda^s(\psi)$ is by orientation preserving homeomorphisms.   The proof of \cite[Th\'eor\`eme 2.5]{Bar_caracterisation} shows directly that the action of $G$ on $\Lambda^s(\varphi)$ and $\Lambda^s(\psi)$ is also minimal.
Thus, we may apply Theorem \ref{thm:hyp_action} and conclude that the actions of $G$ on the respective leaf spaces are conjugate.  We wish to show that this conjugacy extends to a conjugacy of the actions of $\pi_1(M)$.  

Apply a conjugacy so that the actions of $G$ on $\Lambda^s(\varphi)$ and $\Lambda^s(\psi)$ agree.  Now our goal is to show that, after possibly further conjugating by an integer translation (which commutes with the action of $G$), the actions of $\pi_1(M)$ agree.   
Let $\rho_\varphi$ and $\rho_\psi$ denote the actions on $\Lambda^s(\varphi)$ and $\Lambda^s(\psi)$ respectively, assumed to agree on the restrictions to $G$.  

Note that if $\gamma \in \pi_1(M)$, and $x \in \R$ is an attracting fixed point of $\rho_\varphi(g)$ for some element $g \in G$, then $\rho_\varphi(\gamma)(x)$ is an attracting fixed point of $\rho_\varphi(\gamma g \gamma^{-1})$, where we have $\gamma g \gamma^{-1} \in G$.  The same applies to $\rho_\psi(\gamma)$.  The set of all attracting fixed points for elements of $G$ is dense, and each element with fixed points has a $\bZ$-invariant set of attracting fixed points with exactly one in $[0,1)$.    

First we verify that  $\rho_ \varphi(\gamma)$ preserves orientation if and only if $\rho_\psi(\gamma)$ does.  Suppose $\rho_ \varphi(\gamma)$ reverses orientation.  Let $g_1, g_2$, and $g_3$ in $G$ be elements with attracting fixed points satisfying 
\[ \rho_\varphi(g_1)_+ < \rho_\varphi(g_2)_+ < \rho_\varphi(g_3)_+ < \rho_\varphi(g_1)_+ + 1 \]
Since $\rho_ \varphi(\gamma)$ reverses orientation, we have 
\[ \rho_\varphi(\gamma g_1 \gamma^{-1})_+ > \rho_\varphi(\gamma g_2 \gamma^{-1})_+ > \rho_\varphi(\gamma g_3 \gamma^{-1})_+ > \rho_\varphi(\gamma g_1 \gamma^{-1})_+ - 1 \]
whereas if $\rho_\varphi(\gamma)$ preserved the orientation, the original order would not be affected by conjugation.   
Since $\gamma g \gamma^{-1} \in G$, this ordering holds also for the action under $\rho_\psi$, showing that $\rho_\psi(\gamma)$ necessarily reverses orientation as well.  The situation being symmetric, we have proved the claimed if and only if statement.  

Now, consider the subgroup $P$ of elements of $\pi_1(M)$ whose action preserves orientation on $\Lambda^s(\varphi)$; or, as we have just shown, equivalently preserves orientation of $\Lambda^s(\psi)$.  
Our description above also implies that for any $\gamma\in P$, we have $\rho_\varphi(\gamma) = \rho_\psi(\gamma) \circ T_\gamma$ for some integer translation $T_\gamma$. Since orientation-preserving elements commute with integer translation, the map $\gamma \in P \mapsto T_\gamma$ is a group homomorphism. Now, when $\gamma \in G$, $T_{\gamma}$ is the identity, thus $T_{\gamma}$ is the identity for any $\gamma \in P$. So in particular, the actions of $\rho_\varphi$ and $\rho_\psi$ are identical on $P$.

If instead we consider $\gamma$ an element reversing the orientation on the leaf spaces, then $\rho_\psi(\gamma)$ and $\rho_\varphi(\gamma)$ each have a unique fixed point. Their action being determined modulo integer translations means that 
there exists an integer translation $T'_\gamma$ such that $\rho_\varphi(\gamma) = {T'}_\gamma \rho_\psi(\gamma) {T'}_\gamma^{-1}$.
Fix some orientation-reversing element $\gamma_0$.  Up to conjugating the action of $\rho_\psi$ by an integer translation, we can assume that $T'_{\gamma_0}$ is the identity, i.e.~ that $\rho_\psi(\gamma_0) = \rho_\varphi(\gamma_0)$.    
Notice that this conjugation does not affect the fact that $\rho_\varphi$ and $\rho_\psi$ are identical on $P$, since the action of elements in $P$ commutes with integer translations.   We wish to show now that $T'_\gamma = 0$ for all $\gamma \in \pi_1(M) \smallsetminus P$, so the actions agree. 

Let $\gamma$ be any orientation reversing element.  Then $\gamma_0\gamma$ preserves the orientation so $\rho_\psi(\gamma_0 \gamma) = \rho_\varphi(\gamma_0 \gamma)$. As $\rho_\psi(\gamma_0) = \rho_\varphi(\gamma_0)$, we deduce directly that $\rho_\psi(\gamma) = \rho_\varphi(\gamma)$.
Hence, we proved that the actions $\rho_\varphi$ and $\rho_\psi$ are the same, ending the proof of Theorem \ref{thm_main}.

\section{Classifying self orbit-equivalences} \label{sec:SOE}

This section gives the proof of Theorem \ref{thm_criterion}.  We start by introducing some additional necessary background material on the orbit space of the flow.

Returning to the picture from Section \ref{sec_skew}, recall that the orbit space of a skew Anosov flow is homeomorphic to a diagonal strip in $\bR^2$  foliated by $\Lambda^s$ and $\Lambda^u$ in the two coordinate directions.  For each orbit $o\in \orb$, the ideal quadrilateral in $\orb$ with corners $o$ and $\eta (o)$ and sides the stable and unstable (half-)leaves of $o$ and $\eta (o)$ is called a \emph{lozenge}.  
The union of lozenges associated with the orbits $\eta^{n}(o)$, $n\in \bZ$ is called a \emph{string of lozenges}.  The reader may consult~\cite[section 2]{BF_counting} for more background about lozenges in general Anosov flows.

\begin{figure}[h]
\begin{center}
\scalebox{0.85}{
\begin{pspicture}(-0.5,-0.5)(6,6)
\psline[linewidth=0.04cm,linestyle=dashed](1.5,0.5)(4.5,3.5)
\psline[linewidth=0.04cm,linestyle=dashed](0.5,1.5)(3.5,4.5)
\psline[linewidth=0.04cm,arrowsize=0.05cm 2.0,arrowlength=1.4,arrowinset=0.4]{->}(1,0)(5,0)
\psline[linewidth=0.04cm,arrowsize=0.05cm 2.0,arrowlength=1.4,arrowinset=0.4]{->}(0,1)(0,5)
\rput(5.2,-0.2){$\Lambda^u$}
\rput(-0.2,5.2){$\Lambda^s$}
 \psline[linewidth=0.04cm,linecolor=blue](0.5,1.5)(2.5,1.5)
 \psline[linewidth=0.04cm,linecolor=red](1.5,0.5)(1.5,2.5)
 \psline[linewidth=0.04cm,linecolor=blue](1.5,2.5)(3.5,2.5)
 \psline[linewidth=0.04cm,linecolor=red](2.5,1.5)(2.5,3.5)
 \psline[linewidth=0.04cm,linecolor=blue](2.5,3.5)(4.5,3.5)
 \psline[linewidth=0.04cm,linecolor=red](3.5,2.5)(3.5,4.5)
\end{pspicture}
}
\end{center}
 \caption{The orbit space $\orb$ with (part of) a string of lozenges} \label{fig:orbit_space}
\end{figure}
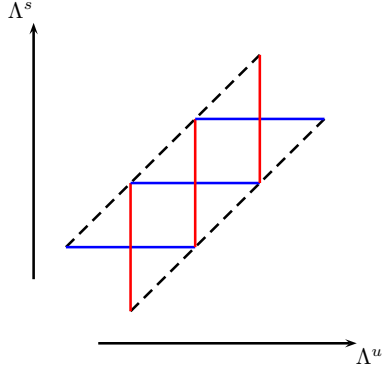

Recall (\cite{Bar_MPOT}) that an (immersed) incompressible torus $T$ is {\em quasi-transverse} to an Anosov flow $\varphi$ if it is transverse everywhere except along finitely many periodic orbits of $\varphi$.
Barbot \cite[Th\'eor\`eme C]{Bar_MPOT} (see also \cite[Theorem 6.10]{BarbotFenley_pA_toroidal}) showed that any incompressible embedded torus in $M$ can be isotoped to a quasi-transverse torus, except if it is the boundary of a tubular neighborhood of an embedded one-sided Klein bottle.  Such embeddings of a Klein bottle do not actually arise if the flow is transversally orientable and $\bR$-covered. We were not able to locate a proof of this fact in the literature so add it as a lemma below. We thank Sergio Fenley for discussing this fact with us.

\begin{lemma} \label{lem:klein}
 Let $\varphi$ be a transversally orientable skew $\R$-covered Anosov flow on $M$. Then $M$ does not contain a $\pi_1$-injective immersion of the Klein bottle.
\end{lemma}

\begin{proof}
 Suppose there is a subgroup $G$ of $\pi_1(M)$ isomorphic to the fundamental group of the Klein bottle, i.e., we have non trivial elements $a,b\in \pi_1(M)$ such that $a b a^{-1} = b^{-1}$ and $G = \langle a,b\rangle$.  
Since $\varphi$ is transversely orientable, $G$ acts faithfully on $\Lambda^s(\varphi) \simeq \bR$ by orientation preserving homeomorphisms commuting with integer translations.   Consider the induced action of $G$ on $\bR/ \eta^2$.  For this induced action, both $a$ and $b$ have a single attracting and single repelling fixed point.  Since $a b a^{-1} = b^{-1}$, we must have that $a$ interchanges the attracting and repelling points for $b$.  But this gives $a$ an orbit of period two, contradicting that it acts with fixed points. 
\end{proof}

The main result that we need is the following: 

\begin{lemma}[Barbot \cite{Bar_MPOT}] \label{lem:lozenges}
 If $T$ is a quasi-transverse torus in $M$ for a skew Anosov flow, then each lift $\wt T$ of $T$ to the universal cover projects to a unique string of lozenges $\cC$ in $\orb$. That string of lozenges is the unique string of lozenges left invariant by the $\bZ^2$ subgroup of $\pi_1(M)$ that fixes $\wt T$.  The interior of each lozenge in the string corresponds to a maximal annulus in $T$ transverse to the flow; its corners are the periodic orbits bounding the annulus.   
  
Conversely, if $G$ is a subgroup of $\pi_1(M)$ isomorphic to  $\bZ^2$, then there exists a unique string of lozenges in $\orb$ fixed by $G$. Moreover that string of lozenge is the projection of an (a priori only immersed) torus $T$.
\end{lemma}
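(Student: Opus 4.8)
The plan is to prove the two directions separately, following Barbot's ``mise en position optimale'' strategy \cite{Bar_MPOT}. For the forward direction, I would start from the definition of a quasi-transverse torus: its locus of tangency with the flow is a finite (and, in the skew case, nonempty) union of closed orbits $\gamma_1,\dots,\gamma_k\subset T$, which are disjoint essential simple closed curves --- hence mutually isotopic in $T$ --- and cutting along them decomposes $T$ into maximal transverse annuli $A_1,\dots,A_k$, cyclically indexed with $A_j$ bounded by $\gamma_j$ and $\gamma_{j+1}$. Since quasi-transverse tori are incompressible, a lift $\wt T\subset\M$ has stabilizer $\Gamma:=\mathrm{Stab}_{\pi_1(M)}(\wt T)\cong\bZ^2$, generated by the class $\alpha$ of one of the $\gamma_j$ and a transverse class $\beta$; accordingly $\wt T=\bigcup_j\wt A_j$, glued along lifted orbits $\wt\gamma_j$, all $\Gamma$-translates of finitely many. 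Projecting by $\pi\colon\M\to\orb$: each $\wt\gamma_j$ is a single orbit, so $p_j:=\pi(\wt\gamma_j)$ is a point fixed by a conjugate of $\alpha$, i.e.\ a corner; each $\wt A_j$ is embedded and transverse to the flow, so $\pi$ restricts to an open embedding of $\wt A_j$ onto a region $R_j\subset\orb$.

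The core of the argument is to show that $R_j$ is the interior of a lozenge with corners $p_j$ and $p_{j+1}$. I would do this by analysing the flow restricted to the bi-infinite transverse strip $\wt A_j$: it is transverse to the two boundary orbit-lines, hence a product flow, and by tracking how the weak stable and unstable leaves of $p_j$ and $p_{j+1}$ cut across $\wt A_j$ --- crucially using the diagonal-strip structure of $\orb$ and a Poincar\'e--Bendixson / index argument on $\wt A_j$ to rule out $p_j$ and $p_{j+1}$ lying on a common weak leaf --- one identifies $R_j$ with a lozenge interior; maximality of $A_j$ then forces a single lozenge rather than a union of several. The $R_j$ together with the corners $p_j$ form a connected union of lozenges meeting successively at corners, and $\Gamma$-equivariance (translation by $\beta$) identifies the configuration with a single $\eta$-orbit of lozenges, i.e.\ a string $\cC$ with $\pi(\wt T)=\cC$; this construction is exactly the asserted correspondence ``maximal transverse annulus $\leftrightarrow$ lozenge interior, boundary orbits $\leftrightarrow$ corners''. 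For uniqueness, any $\Gamma$-invariant string has its corner set preserved by $\alpha\in\Gamma$, hence contained in $\fix(\alpha)\subset\orb$, and since $\fix(\alpha)$ is exactly the corner set of a single string (the standard structure of the fixed set of a periodic-orbit element; see \cite{Fen_Anosov_flow_3_manifolds} and \cite[section 2]{BF_counting}), this pins down the string.

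For the converse, given $G\cong\bZ^2$ with generators $a,b$, I would first invoke the structure theory of $\bR$-covered flows (\cite{Bar_caracterisation,Fen_Anosov_flow_3_manifolds}) to get that some primitive element of $G$, say $a$, is hyperbolic-like on $\Lambda^s$, so $\fix(a)\subset\orb$ is the corner set of a string $\cC$; since $b$ commutes with $a$ it preserves $\fix(a)$, hence $\cC$, and $G$-invariance together with uniqueness follow as above. To realise $\cC$ by an immersed torus: $a$ fixes every corner of $\cC$, hence preserves each lozenge $L$, acting freely and properly discontinuously on the open disk $L^\circ$; the flow line bundle $\pi^{-1}(L^\circ)\to L^\circ$ is a trivial $\bR$-bundle, so carries an $\langle a\rangle$-equivariant section --- build one over a fundamental domain for $\langle a\rangle$ with the obvious matching condition on the two edges, which is unobstructed over a disk, then extend equivariantly. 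This section descends to an annulus in $M$ transverse to the flow and bounded by the two corner orbits; gluing such annuli along their common corner orbits around all of $\cC$ --- a procedure periodic under $b$ --- yields a closed surface immersed in $M$, a torus (or, in the non-coorientable case, a Klein bottle, which may be replaced by its orientation double cover) since it is assembled cyclically from annuli glued along circles, and a lift of which to $\M$ projects onto $\cC$.

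The step I expect to be the main obstacle is the identification in the second paragraph: that an embedded transverse annulus joining two periodic orbits projects to \emph{exactly} a lozenge interior with the correct corners. This is where one must exclude all the competing configurations in $\orb$ and genuinely use the global skew structure together with a careful Poincar\'e--Bendixson / index analysis of the induced flow on the annulus. The structural inputs borrowed in the converse --- that every $\bZ^2$ in $\pi_1(M)$ contains a hyperbolic-like element, and that $\fix$ of such an element is the corner set of a string --- are also non-trivial, but can be quoted from the work of Fenley and Barbot; the remaining items (the equivariant section, the gluing, and the uniqueness bookkeeping) should be routine.
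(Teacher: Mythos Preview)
The paper does not prove this lemma: it is stated as a result of Barbot \cite{Bar_MPOT} and followed only by a remark on the general (non-skew) case, with no argument given. So there is no ``paper's own proof'' to compare against; your proposal is effectively a reconstruction of Barbot's argument.

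As such a reconstruction, your outline is broadly on the right track and correctly identifies the crux. A few comments. First, your assertion that $\pi$ restricts to an \emph{embedding} of $\wt A_j$ into $\orb$ is not automatic from transversality alone (which only gives a local homeomorphism); ruling out that an orbit of $\wt\varphi$ meets $\wt A_j$ twice is part of the same analysis you flag as the main obstacle, and should be bundled into it rather than asserted beforehand. Second, ``maximality of $A_j$'' is not really what pins $R_j$ down to a single lozenge; the point is rather that $\partial\wt A_j$ consists of exactly two lifted periodic orbits, so $R_j$ is an open ideal quadrilateral in the strip model with exactly those two corners --- the lozenge structure then comes from the skew geometry, not from maximality of the annulus in $T$. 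Third, in the converse you should be explicit that the element $b$ (or some power of it) acts as a nontrivial shift on the corner set of $\cC$ (equivalently, as a power of $\eta$ on the string); otherwise the annuli you build do not close up to a torus but to a cylinder. This follows from the fact that $b$ commutes with $a$ yet cannot fix the same orbits (else $\langle a,b\rangle$ would not be $\bZ^2$ acting effectively), but it deserves a sentence. With these adjustments, your sketch matches what one finds in Barbot's work.
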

\begin{rem}
We have stated this lemma in the special case of skew Anosov flows. For general Anosov flows on $3$-manifold, one needs to replace ``string of lozenges" by ``chain of lozenges'', see \cite[section 2]{BF_counting}, and some $\bZ^2$ subgroups may fix two distinct chains of lozenges.
\end{rem}

We also need the following easy proposition giving another condition for an element $\gamma \in \pi_1(M)$ to be represented by a periodic orbit of a skew Anosov flow $\varphi$.
Recall that $\Lambda^s \cong \bR$ denotes the leaf space of the weak stable foliation of the lifted flow $\wt \varphi$. We denote the natural projection by $\pi^s \colon \wt M \to \Lambda^s$.

\begin{proposition}\label{prop_charac_periodic_orbits}
 Let $\varphi$ be a skew Anosov flow on a $3$-manifold $M$.
 Let $\gamma \in \pi_1(M)$. The following are equivalent:
 \begin{enumerate}[label=(\roman*)]
  \item \label{item_represented} The element $\gamma$ is represented by a periodic orbit of $\varphi$;
  \item \label{item_onebounded} There exists a $\gamma$-invariant curve $c \subset \wt M$ such that $\pi^s(c)$ is bounded (either above or below) in $\Lambda^s$;
  \item \label{item_allbounded} For any $\gamma$-invariant curve $c \subset \wt M$, its image $\pi^s(c)$ is bounded in $\Lambda^s$.
 \end{enumerate}
\end{proposition}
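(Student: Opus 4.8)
The plan is to prove the cycle of implications \ref{item_represented} $\Rightarrow$ \ref{item_allbounded} $\Rightarrow$ \ref{item_onebounded} $\Rightarrow$ \ref{item_represented}, the middle one being immediate; throughout we assume $\gamma$ is nontrivial (the statement needs this). Two facts will be used as inputs. The first is the standard consequence of the Fenley--Barbot structure theory that a nontrivial element of $\pi_1(M)$ is represented by a periodic orbit of $\varphi$ if and only if it fixes a point of $\Lambda^s$: one implication is immediate, since lifting a periodic orbit representing $\gamma$ to a $\gamma$-invariant orbit of $\wt\varphi$ and taking the weak stable leaf through it produces a fixed point of $\gamma$ in $\Lambda^s$; the other implication --- that a $\gamma$-invariant weak stable leaf contains a $\gamma$-invariant, hence periodic, orbit --- is where the Anosov dynamics enters. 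The second input is the structural fact recalled in Section~\ref{sec_skew}: an element of $\pi_1(M)$ acting on $\Lambda^s\cong\R$ preserving orientation and with a fixed point is hyperbolic-like, so its fixed set is discrete, $\tau$-periodic, and has exactly two points in each fundamental domain of $\tau$. Finally, since $\pi_1(M)$ acts freely on $\wt M$, any $\gamma$-invariant curve $c$ descends to a closed loop in $M$ freely homotopic to $\gamma$, so $c = \bigcup_{n\in\bZ}\gamma^n K$ for some compact $K$.

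\ref{item_represented} $\Rightarrow$ \ref{item_allbounded}: If $\gamma$ is represented by a periodic orbit, then $\gamma^2$ fixes a point of $\Lambda^s$; moreover $\gamma^2\neq 1$ (as $\pi_1(M)$ is torsion-free) and $\gamma^2$ preserves orientation on $\Lambda^s$, so the second input applies: $F:=\fix(\gamma^2)\subset\R$ is discrete, $\tau$-periodic, with two points per fundamental domain of $\tau$. Consequently each component of $\R\smallsetminus F$ has length at most that of a $\tau$-period, is preserved setwise by $\gamma^2$ (which fixes its two endpoints), and is carried by $\gamma$ to another component of $\R\smallsetminus F$. Given a $\gamma$-invariant curve $c$ with $c=\bigcup_n\gamma^n K$, the compact set $\pi^s(K)$ meets only finitely many components of $\R\smallsetminus F$, say $J_1,\dots,J_r$; since $\gamma^2$ preserves each $J_i$ and $\gamma$ sends $\{J_1,\dots,J_r\}$ into the set of components, $\pi^s(c)=\bigcup_n\gamma^n(\pi^s(K))$ is contained in $J_1\cup\dots\cup J_r\cup\gamma(J_1)\cup\dots\cup\gamma(J_r)$, a finite union of bounded intervals. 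Hence $\pi^s(c)$ is bounded.

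\ref{item_allbounded} $\Rightarrow$ \ref{item_onebounded} is trivial, as $\gamma$-invariant curves exist. \ref{item_onebounded} $\Rightarrow$ \ref{item_represented}: let $c$ be $\gamma$-invariant with $S:=\pi^s(c)$ bounded above (the bounded-below case is symmetric). Then $S$ is a nonempty $\gamma$-invariant subset of $\Lambda^s\cong\R$. If $\gamma$ preserves orientation on $\Lambda^s$, then $\gamma(\sup S)=\sup\gamma(S)=\sup S$, so $\gamma$ fixes $\sup S\in\Lambda^s$. If $\gamma$ reverses orientation, then $\gamma(\sup S)=\inf\gamma(S)=\inf S$, so $S$ is bounded below as well, and $\gamma$ maps the compact interval $[\inf S,\sup S]$ onto itself; being orientation-reversing, it has a fixed point there by the intermediate value theorem. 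In either case $\gamma$ fixes a point of $\Lambda^s$, so by the first input $\gamma$ is represented by a periodic orbit.

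The only non-elementary step is the first input, specifically the assertion that an element fixing a weak stable leaf must fix an orbit inside it. If it is not to be cited from Fenley or Barbot, one argues that $\gamma$ acts on the line of orbits of the invariant weak stable leaf and that the contraction of the strong stable foliation along the flow prevents this action from being fixed-point free. Everything else is one-dimensional dynamics, the only bookkeeping being the use of $\gamma^2$ to handle orientation-reversing behavior on $\Lambda^s$.
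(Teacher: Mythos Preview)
Your proof is correct and follows essentially the same approach as the paper's: both arguments pass to $\gamma^2$ to get an orientation-preserving hyperbolic-like element, use a compact fundamental domain for the $\gamma^2$-action on $c$, and trap its $\pi^s$-image in a bounded $\gamma^2$-invariant set determined by $\fix(\gamma^2)$. The only cosmetic differences are that the paper names this bounding set as an interval $[\eta^{-k}(x),\eta^k(x)]$ rather than a finite union of components of $\R\smallsetminus\fix(\gamma^2)$, and in the orientation-reversing case of \ref{item_onebounded}$\Rightarrow$\ref{item_represented} the paper simply observes that any orientation-reversing homeomorphism of $\R$ has a fixed point, which is slightly quicker than your route through $[\inf S,\sup S]$.
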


\begin{proof} 
Item \ref{item_allbounded} trivially implies \ref{item_onebounded}, we will show that item \ref{item_onebounded} implies \ref{item_represented} and that \ref{item_represented} implies \ref{item_allbounded}.

\ref{item_onebounded} $\Rightarrow$ \ref{item_represented}:  Assume that there exists a $\gamma$-invariant curve $c \subset \wt M$ such that $\pi^s(c)$ is bounded above in $\Lambda^s$, and let $L^+$ denote its upper bound.  (The case where $\pi^s(c)$ is bounded below is analogous.) 
If the action of $\gamma$ reverses the orientation of $\Lambda^s$, then it fixes a leaf which necessarily contains a periodic orbit represented by $\gamma$, and we are done.  
If $\gamma$ instead preserves the orientation, then the upper bound $L^+ \in \Lambda^s$ is a $\gamma$-invariant point since  $\pi^s(c)$ is $\gamma$--invariant, and thus the leaf corresponding to this point contains a unique $\gamma$-invariant orbit. 
This shows item \ref{item_represented}.  
 
\ref{item_represented} $\Rightarrow$ \ref{item_allbounded}:  Suppose $\gamma$ is represented by a periodic orbit of the flow and $c \subset \wt M$ is a $\gamma$-invariant curve. 
Supposing first that $\gamma$ preserves orientation on $\R$, the action has an attracting fixed point $x \in \Lambda^s$. Moreover, the images of this point under powers of the half-step up map are alternating attracting and repelling fixed points of $\gamma$; if $i$ is even, then $\eta^i(x)$ is an attracting fixed point, if $i$ is an odd integer then $\eta^i(x)$ is a repelling fixed point, as described in Section \ref{sec_skew}.   In the case where $\gamma$ reverses orientation, then the same argument above applies if we replace $\gamma$ with $\gamma^2$, which preserves orientation. 

In either case, since the action of $\gamma^2$ on $c$ is free, we may take a compact fundamental domain $I$ for the action.  The projection $\pi^s(I)$ is contained in some bounded interval $[\eta^{-k}(x), \eta^k(x)]$. Therefore, $\pi^s(c) = \cup_{n\in \bZ} \gamma^{2n} \cdot \pi^s(I)\subset [\eta^{-k}(x), \eta^k(x)]$, so it is bounded, proving item \ref{item_allbounded}.
\end{proof}

Combining Theorem \ref{thm_main} with Proposition \ref{prop_charac_periodic_orbits} gives the following as a direct consequence. 

\begin{theorem} \label{thm:bounded}
 Let $\varphi$ and $\psi$ be two skew $\bR$-covered Anosov flows on a $3$-manifold $M$.
 The flows $\varphi$ and $\psi$ are isotopically equivalent if and only if for any periodic orbit $\alpha$ of $\psi$ (resp.~$\varphi$) with lift $\wt\alpha \subset \wt M$, the projection $\pi_1^s(\wt\alpha) \subset \Lambda^s(\varphi)$ (resp.~$\pi_1^s(\wt\alpha) \subset \Lambda^s(\psi)$) is bounded.
\end{theorem}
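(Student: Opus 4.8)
The plan is to convert the boundedness condition into an inclusion between the sets $\cP(\psi)$ and $\cP(\varphi)$ by means of Proposition \ref{prop_charac_periodic_orbits}, and then to combine this with Theorem \ref{thm_main} together with one extra rigidity argument. If $\alpha$ is a periodic orbit of $\psi$ representing the conjugacy class $\gamma$, then a lift $\wt\alpha\subset\wt M$ is a $\gamma$-invariant curve, so the equivalence of items \ref{item_represented}, \ref{item_onebounded}, \ref{item_allbounded} in Proposition \ref{prop_charac_periodic_orbits}, applied to $\varphi$, shows that $\pi^s(\wt\alpha)\subset\Lambda^s(\varphi)$ is bounded if and only if $\gamma\in\cP(\varphi)$, independently of the chosen lift. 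Hence the boundedness hypothesis of the theorem is equivalent to the inclusion $\cP(\psi)\subseteq\cP(\varphi)$. The ``only if'' direction is then immediate: isotopic equivalence gives $\cP(\varphi)=\cP(\psi)$ by Theorem \ref{thm_main}, hence a fortiori $\cP(\psi)\subseteq\cP(\varphi)$, hence the boundedness condition.

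For the ``if'' direction, by the translation above and Theorem \ref{thm_main} it remains to upgrade $\cP(\psi)\subseteq\cP(\varphi)$ to the equality $\cP(\psi)=\cP(\varphi)$. As in the proof of Theorem \ref{thm_main} (passing to the normal subgroup generated by squares in the non-transversely-orientable case), this inclusion says precisely that every element acting hyperbolic-like on the stable leaf space of $\psi$ also acts hyperbolic-like on that of $\varphi$. My plan is to run the construction in the proof of Proposition \ref{prop:conj} in this one-sided setting to produce, instead of a conjugacy, a $\pi_1(M)$-equivariant monotone map $h\colon\Lambda^s(\psi)\to\Lambda^s(\varphi)$ commuting with the one-step-up maps: normalize coordinates using a fixed $g_0\in\cP(\psi)$, define $\Theta$ on the dense set of attracting fixed points of elements of $\cP(\psi)$ by sending the attracting fixed point of $\gamma$ on $\Lambda^s(\psi)$ to that of $\gamma$ on $\Lambda^s(\varphi)$, verify $\Theta$ is order preserving, and extend. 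Once $h$ exists, minimality of both actions (Barbot) forces it to be a homeomorphism: its image is $\pi_1(M)$-invariant, hence dense, hence $h$ is continuous and onto; and if $h$ collapsed a nondegenerate interval $I$, then choosing $\gamma\in\cP(\psi)$ with a repelling fixed point in $I$ and iterating $\gamma$ would spread $I$ across a full period of the one-step-up map, so $h$ would have to collapse an interval of length at least one period, which is absurd. Thus $h$ conjugates the two actions, $\cP(\psi)=\cP(\varphi)$, and Theorem \ref{thm_main} concludes.

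I expect the main obstacle to be the order preservation of $\Theta$, i.e.\ that the cyclic order of the attracting fixed points of elements of $\cP(\psi)$ is the same on $\Lambda^s(\psi)$ and on $\Lambda^s(\varphi)$. The proof of Proposition \ref{prop:conj} uses \emph{both} inclusions between the hyperbolic-like sets: a linked pair is recognized via ``some word in the pair has a fixed point for all powers'' (Lemma \ref{lem:linked}), information which \emph{does} transfer from $\psi$ to $\varphi$ under our one-sided hypothesis, whereas an unlinked configuration is recognized via ``some word is eventually fixed-point free'' (Lemma \ref{lem:unlinked}), which a priori does not transfer. The fix I have in mind is to appeal only to the transferable direction: using Lemmas \ref{lem:dense} and \ref{lem:pairs_dense} (valid since the $\psi$-action is minimal) to select every auxiliary element inside $\cP(\psi)$ and in configurations witnessed by a word with fixed points for all $N$, so that each deduction in the argument of Proposition \ref{prop:conj}---locating a third fixed set relative to two already-located ones, and in particular the initial choice of a reference pair $g_0,f_0\in\cP(\psi)$ whose relative position, used to fix the orientation on $\Lambda^s(\varphi)$, must remain unlinked there---invokes Lemma \ref{lem:unlinked} only in the form ``this word has fixed points for all $N$, so its image does too, so the image ordering is constrained''. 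Checking that such a choice of reference and auxiliary elements is always possible is the technical heart of the proof.
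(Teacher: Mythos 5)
Your reduction agrees with the paper's: via Proposition \ref{prop_charac_periodic_orbits} the boundedness hypothesis is equivalent to $\cP(\psi)\subseteq\cP(\varphi)$, and the ``only if'' direction is the easy direction of Theorem \ref{thm_main} combined with that proposition. The paper treats the converse as an immediate application of Theorem \ref{thm_main} as well, and does not introduce any one-sided conjugacy machinery; you are right that, read literally, this means passing from the containment $\cP(\psi)\subseteq\cP(\varphi)$ to the equality that Theorem \ref{thm_main} actually requires, and it is a fair observation that this passage deserves comment.

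However, your attempt to supply that passage has a genuine gap at exactly the point you flag. Under the one-sided hypothesis, ``this word has a fixed point'' transfers from the $\psi$-action to the $\varphi$-action, but ``this word is fixed-point free'' does not. Hence Lemma \ref{lem:linked} only tells you that $\psi$-linked pairs remain $\varphi$-linked; you have no way to certify that a $\psi$-unlinked pair is $\varphi$-unlinked. Every ordering deduction in the proof of Proposition \ref{prop:conj} --- the normalization via the reference pair $f,g$, the dichotomy \eqref{eq_star} versus \eqref{eq_starstar}, and the placement of the auxiliary elements $c,d$ --- uses unlinkedness \emph{in the target} as a hypothesis before Lemma \ref{lem:unlinked} can constrain the order; if a $\psi$-unlinked pair were $\varphi$-linked, the lemma would impose no constraint at all, and the order-preservation of your map $\Theta$ is not established. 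Your proposed remedy (choose all auxiliary elements so that only the transferable implication is invoked) is precisely the step you defer as the ``technical heart,'' and no argument is given that such choices exist or suffice. So as written the proposal does not prove the ``if'' direction: you must either genuinely prove a one-sided version of Proposition \ref{prop:conj}, or give an independent argument that the containment of periodic-orbit classes forces equality; neither is done.
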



\subsection{Proof of Theorem \ref{thm_criterion}} \label{subsec:SOE}

To set up for the proof, we begin by giving precise definitions of translation number with respect to $\varphi$, and the displacement of a curve by a Dehn twist. 

\begin{definition}[Translation number]
Let $\beta \in \pi_1(M)$, and consider its action on $\Lambda^s(\phi)$.  Fix $x \in \Lambda^s(\phi)$.  For each $q \in \bZ$ there exists a unique $p_q \in \bZ$ 
so that $\tau^{p_q}(x) \leq \beta^q(x) < \tau^{p_q+1}(x)$.  
We define 
\[ \trans_\phi(\beta) := \lim_{q \to \infty} \frac{p_q}{q}. \]
\end{definition}
Since $\tau$ and $\beta$ commute, it is a standard exercise to show that this limit exists and is in fact independent of the choice of $x$; indeed, $\trans_\phi(\beta)$
 is simply the classical {\em translation number} for the action of $\beta$ on $\Lambda_s$ with respect to any parametrization of $\Lambda_s$ where $\tau$ acts as translation by $1$. 
 
 \begin{rem} \label{rem_translation_number_and_lozenges}
  We will consider below only the case when $\beta$ is freely homotopic to a curve in a quasi-transverse torus $T$. Thus $\beta$ fixes a (unique) string of lozenges in the orbit space. In this case we therefore have, $\trans_{\phi}(\beta)= k \in \bZ$ where $k$ is such that if $x$ is any of the corner of the string of lozenges, then $\beta x = \tau^k(x)$. 
 \end{rem}

If a skew Anosov flow $\varphi$ on $M$ is transversally oriented, we saw (Lemma \ref{lem:klein}) that $M$ cannot admit an incompressible embedding of the Klein bottle, thus by  \cite[Th\'eor\`eme C]{Bar_MPOT} any incompressible embedded torus in $M$ can be put in quasi-transverse position with respect to the flow.
We further have, thanks to \cite[Th\'eor\`eme E]{Bar_MPOT}, that  any collection of pairwise disjoint, non-isotopic  incompressible embedded tori,  can be simultaneously isotoped to a collection of still disjoint (and obviously still non-isotopic) quasi-transverse tori. It is thus no loss of generality to adopt the following convention for transversely orientable flows.

\begin{convention*}
For the remainder of this section, we restrict our attention to transversely orientable flows, and we will always assume that the tori we consider are in quasi-transverse position.
\end{convention*}

Next we will define the ``displacement" of an orbit by a Dehn twist.  We first recall the definition of Dehn twists to emphasize that they come with a specification of a transverse orientation on the torus.

\begin{definition}
Let $T$ be an embedded torus, and $\beta \in \pi_1(T)$.  A {\em Dehn twist along $\beta$} is the mapping class of a map $D_\beta$ defined as follows:  Take a small product region $T \times [-1,1] \in M$, and fix a basis $\{ \alpha, \nu \}$ for $\pi_1(T)$ giving an identification of $T$ with $S^1 \times S^1 = \R/\bZ \times \R/\bZ$ where 
$\beta = \alpha^p \nu^q$ for some $p,q$.   For $(x,y, z) \in T \times [-1,1]$, we define $D_\beta(x,y,t) = (x + p h(t), y+ q h(t), t)$, where $h\colon [-1,1] \to [0,1]$ is a smooth bump function with $h(-1)=0$ and $h(1)=1$, and extend $D_\beta$ to be the identity elsewhere on $M$.
\end{definition} 

\begin{rem}
Note that reversing the transverse orientation of $T$ and applying the same construction results in a map isotopic to the {\em inverse} of that defined above.  Thus, the notation $D_\beta$, while standard, is somewhat misleading because the mapping class does not depend on $\beta$ alone.   There is no intrinsic way to
distinguish $D_\beta$ from $D_\beta^{-1}$.  Thus,  by convention, we say that a map $D_\beta$ {\em comes with} the data of a choice of transverse orientation.  When we speak of a Dehn twist supported on a torus neighborhood $T \times [-1,1]$, we always assume the orientation is as given by the interval $[-1,1]$.  
\end{rem} 

It will be convenient for us to choose homeomorphisms representing Dehn twists which are in a particularly nice form with respect to the flow, as follows.  

\begin{convention}[Good Dehn twist coordinates] \label{conv:coord}
 Given a quasi-transverse torus $T$, we choose the coordinates $(x,y, z) \in T \times [-1,1]$ in the following way: Let $\alpha_1, \dots, \alpha_{2n}$ be the periodic orbits of $\varphi$ on $T$. Then we assume that the local stable leaves of the orbits $\alpha_i$ in $T \times [-1,1]$ are given by the equation $x = i/2n$.
\end{convention}

With this convention, a Dehn twist $D_\alpha$ on $T$, where $\alpha \in \pi_1(T)$ represents any power of the periodic orbits, will preserve the local stable leaves of the periodic orbits $\alpha_i$.  More generally, given any Dehn twist $D_\beta$ on $T$, and any segment $c(t) := (x_0, y_0, t), t\in [-1,1]$ through $T \times [-1,1]$, the number of times its image $D_\beta(c(t))$ intersects the union of stable leaves of the $\alpha_i$ is the minimal intersection number of $\beta$ with $\alpha_i$.

\begin{definition}[Sign of an intersection] \label{def:sign} 
Given a Dehn twist $D_\beta$ supported on $T \times [-1, 1]$ as above, and an orbit $\phi_t$ intersecting $T$ transversely at $t=0$, we say this intersection is {\em positive} if 
$\varphi^t(z) \in T \times [0,1]$ for small positive $t$, and {\em negative} if $\varphi^t(z) \in T \times [-1,0]$ for small positive $t$.   
\end{definition}

Before giving the formal definition of the displacement $\disp_\varphi(c,f)$ of a closed orbit $c$ under a product of Dehn twists $f$, we motivate this with the following lemma, which describes how a Dehn twist on a torus $T$ affects a segment of a periodic orbit transverse to $T$, from the perspective of the leaf space of $\varphi$.  

For the statement, we fix a quasi-transvers torus $T$ in $M$, a point $z$ with orbit $\varphi^t(z)$ transverse to $T$, and a small product neighborhood $T \times [-1,1]$
so that the orbit $\varphi^t(z) \cap T \times [-1,1]$ is, locally near $t=0$, a segment $J$ between some point $z_- \in T \times \{-1\}$ and $z_+ \in T \times \{1\}$.  Let 
$D_\beta$ be a Dehn twist supported on $T \times [-1,1]$. 

\begin{lemma} \label{lem_action_of_one_dehn_lift}
Let $\wt T \times [-1,1]$ be a lift of $T \times [-1,1]$ to $\wt M$, let $\wt J$ be the lift of $J$ in $\wt T \times [-1,1]$ with endpoints $\tilde z_-, \tilde z_+$, and let $\wt D_\beta$ be the lift of $D_\beta$ fixing $\tilde z_-$.  Finally, let $\{L_i\}_{i\in \bZ}$ be the string of lozenges associated with $\wt T$.
\begin{enumerate}
\item 
If $\wt J$ projects to a point in the lozenge $L_i$, then $\wt D_\beta(\tilde z_+)$ projects into $L_k$, where $k = i + 2\trans_\varphi(\beta)$ if the intersection of $J$ and $T$ is positive, and $k = i-2\trans_\varphi(\beta)$ if the intersection is negative.
\item The stable saturation of $\wt D_\beta(\wt J)$ to the orbit space stays inside the stable saturation of the lozenges between $L_i$ and $L_k$.
\end{enumerate}
\end{lemma} 

\begin{proof}
Without loss of generality, we assume that $\wt J$ projects to a point in $L_0$, and that the sign of the intersection is positive (the negative case is analogous).  
Recall that the lozenges $L_i$ are the projections of strips $A_i$ inside $\wt T$, bounded by periodic orbits of $\wt \varphi$. 

By definition of $D_\beta$, the image $\wt D_\beta(\tilde z_+)$ projects in the orbit space to the lozenge $\beta(L_0)$. Now, if $x_i$ are the corners of the lozenge $L_i$ (enumerated so that $x_i < x_{i+1}$ in $\Lambda_s(\varphi)$), then $\eta^k(x_0) = x_{2k}$.
Thus, by definition of translation number (and Remark \ref{rem_translation_number_and_lozenges}), $\beta(L_0) = L_{2\trans_\varphi(\beta)}$, proving the first part of the lemma.

The second statement follows immediately from Convention \ref{conv:coord} and the structure of lozenges (Lemma \ref{lem:lozenges}).
\end{proof}

\begin{definition}[Displacement]
Let $c$ be a periodic orbit of the skew flow $\varphi$ and $D_\beta$ a Dehn twist on a quasi-transverse torus $T$.
Let $x_1, \dots, x_n$ be the intersection points of $c$ with $T$, and let $\epsilon_i = \{\pm 1\}$ be the sign of the intersection at $x_i$. 
The {\em displacement of $c$ by $D_\beta$ is defined by}
\[
 \disp_\varphi(c, D_\beta) = \sum_{i=1}^n 2\epsilon_i \trans_\varphi(\beta).
\]

If $f = D_{\beta_1} \circ \ldots \circ D_{\beta_k}$ is a composition of Dehn twists on \emph{pairwise disjoint} quasi-transverse tori, then we set
\[
\disp_\varphi(c, f) = \sum_{i=1}^k \disp_\varphi(c, D_{\beta_i}).
\]
\end{definition}

We will need the following observation for the proof of Theorem \ref{thm_criterion}.
\begin{observation}\label{obs:consec_lozenge}
Let $\wt T$ and $\wt T'$ be two \emph{disjoint} lifts of (the same or distinct) quasi-transverse tori. Let $\wt J$ be a segment of a periodic orbit that first crosses $\wt T$ and then crosses $\wt T'$.
Suppose $\wt T$ projects to a string of lozenges $\{L_i\}$, and $\wt T'$ to a string $\{L'_i\}$. Let $i,j$ be such that $\wt J$ is contained in $L_i \cap L'_j$.

Then $L'_j$ is contained in the saturation of $L_i$ by stable leaves; equivalently, $L_i$ is contained in the saturation of $L'_j$ by unstable leaves. 
\end{observation} 

\begin{proof} 

Since the lifts $\wt T$ and $\wt T'$ are disjoint, observe that either $L_i \subset \hfs(L'_{j})$ and $L'_{j}\subset \hfu(L_{i})$, or $L_i\subset \hfu(L'_{j})$ and $L'_{j}\subset \hfs(L_{i})$.  Both configurations are shown in Figure \ref{fig_consecutive_lozenges}.

\begin{figure}[h]
\begin{center}
\scalebox{0.85}{
\begin{pspicture}(-0.5,-0.5)(6,6)

\pspolygon[linestyle=none,fillstyle=solid,
fillcolor=lightgray!20!Lavender!10](1.5,3)(4,3)(4,2.5)(1.5,2.5)

\psline[linewidth=0.04cm,linestyle=dashed](2,0.5)(5,3.5)
\psline[linewidth=0.04cm,linestyle=dashed](0.5,2)(3.5,5)
\psline[linewidth=0.04cm,arrowsize=0.05cm 2.0,arrowlength=1.4,arrowinset=0.4]{->}(1,0)(5,0)
\psline[linewidth=0.04cm,arrowsize=0.05cm 2.0,arrowlength=1.4,arrowinset=0.4]{->}(0,1)(0,5)
\rput(5.2,-0.2){$\Lambda^u$}
\rput(-0.2,5.2){$\Lambda^s$}
 \psline[linewidth=0.04cm,linecolor=blue](0.5,2)(3.5,2)
 \psline[linewidth=0.04cm,linecolor=red](2,0.5)(2,3.5)
\psline[linewidth=0.04cm,linecolor=red](3.5,2)(3.5,5)
\psline[linewidth=0.04cm,linecolor=blue](2,3.5)(5,3.5)

\psline[linewidth=0.04cm,linecolor=blue,linestyle=dashed](1,2.5)(4,2.5)
\psline[linewidth=0.04cm,linecolor=blue,linestyle=dashed](1.5,3)(4.5,3)
\psline[linewidth=0.04cm,linecolor=red,linestyle=dashed](1.5,3)(1.5,0.5)
\psline[linewidth=0.04cm,linecolor=red,linestyle=dashed](4,2.5)(4,5)

\psdot[linewidth=0.04cm](2.75,2.75)
\uput{3pt}[-155](2.75,2.75){$\widetilde J$}

\psdot[linewidth=0.04cm](1.5,2.5)
\psdot[linewidth=0.04cm](2,2)
\psdot[linewidth=0.04cm](4,3)
\psdot[linewidth=0.04cm](3.5,3.5)
\end{pspicture}
}%
\scalebox{0.85}{\begin{pspicture}(-0.5,-0.5)(6,6)

\pspolygon[linestyle=none,fillstyle=solid,
fillcolor=lightgray!20!Lavender!10](3,1.5)(3,4)(2.5,4)(2.5,1.5)
\psline[linewidth=0.04cm,linestyle=dashed](2,0.5)(5,3.5)
\psline[linewidth=0.04cm,linestyle=dashed](0.5,2)(3.5,5)
\psline[linewidth=0.04cm,arrowsize=0.05cm 2.0,arrowlength=1.4,arrowinset=0.4]{->}(1,0)(5,0)
\psline[linewidth=0.04cm,arrowsize=0.05cm 2.0,arrowlength=1.4,arrowinset=0.4]{->}(0,1)(0,5)
\rput(5.2,-0.2){$\Lambda^u$}
\rput(-0.2,5.2){$\Lambda^s$}
 \psline[linewidth=0.04cm,linecolor=blue](0.5,2)(3.5,2)
 \psline[linewidth=0.04cm,linecolor=red](2,0.5)(2,3.5)
\psline[linewidth=0.04cm,linecolor=red](3.5,2)(3.5,5)
\psline[linewidth=0.04cm,linecolor=blue](2,3.5)(5,3.5)

\psdot[linewidth=0.04cm](2.75,2.75)
\uput{3pt}[90](2.75,2.75){$\widetilde J$}

\psline[linewidth=0.04cm,linecolor=blue,linestyle=dashed](0.5,1.5)(3,1.5)
\psline[linewidth=0.04cm,linecolor=blue,linestyle=dashed](2.5,4)(5,4)
\psline[linewidth=0.04cm,linecolor=red,linestyle=dashed](2.5,1)(2.5,4)
\psline[linewidth=0.04cm,linecolor=red,linestyle=dashed](3,1.5)(3,4.5)

\psdot[linewidth=0.04cm](2.5,1.5)
\psdot[linewidth=0.04cm](2,2)
\psdot[linewidth=0.04cm](3,4)
\psdot[linewidth=0.04cm](3.5,3.5)
\end{pspicture}
}%
\end{center}
 \caption{The configuration on the right is impossible, if the shaded lozenge is $L'_{j}$} \label{fig_consecutive_lozenges}
\end{figure}
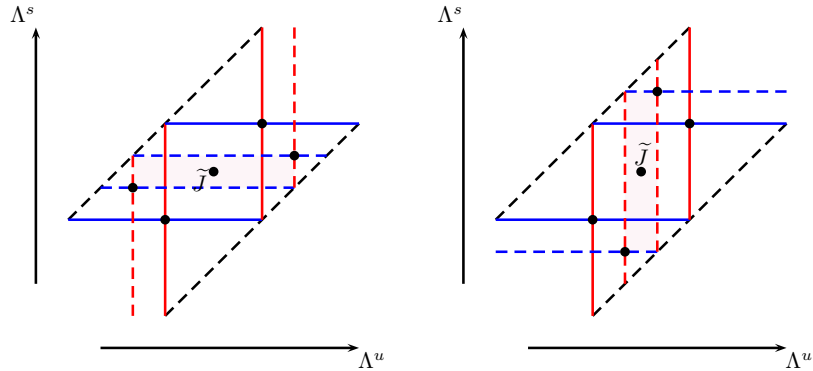

We will show that the second case cannot occur because orbits converge in the positive direction of the flow along $\wt J$.  Precisely, consider the strong unstable leaf through the starting point $\wt x$ of the orbit segment $\wt J$.  Denote this leaf by $\lambda^{uu}$, and
let $l_i \subset \lambda^{uu}$ denote the set of points whose forward orbit intersects the strip $A_i$ in $\wt T$ corresponding to $L_i$. Similarly, define $l'_j \subset \lambda^{uu}$ to be the set of points whose forward orbit intersects the strip $A'_j$ in $\wt T'$ corresponding to $L_j'$.
Hyperbolicity of the flow means that $l'_{j} \subset l_{i}$. Hence $L_j' \subset \hfs(L_{i})$, and equality happens if and only if $L_j'= L_i$ which in turns imply that the orbits bounding $A_i$ and $A_j'$ are the same, which contradicts the assumption that $\wt T$  and $\wt T'$ are quasi-transverse and disjoint. This proves the observation.
\end{proof}

We next reduce the proof of Theorem \ref{thm_criterion} to the following proposition.  

\begin{proposition}\label{prop_bounded}
Suppose $f = D_{\beta_1} \circ \ldots \circ D_{\beta_n}$ is a composition of Dehn twists on pairwise disjoint quasi-transverse tori $T_1, \ldots, T_n$, and let $\tilde{c} \subset \tilde{M}$ be a lift of a periodic orbit $c$ of $\varphi$. 
  Then $\disp_\phi(c,f) = 0$ if and only if for some lift (equivalently, for all lifts) $\tilde{f}$ of $f$ the image 
  $\tilde{f}(\tilde{c})$ has bounded projection to $\Lambda^s(\varphi)$.
\end{proposition} 

\begin{proof}[Proof of Theorem \ref{thm_criterion} given Proposition \ref{prop_bounded}]
Indeed, this is an easy consequence of Proposition \ref{prop_charac_periodic_orbits} and Theorem \ref{thm_main}.  
Theorem \ref{thm_main} states that $f$ is a self orbit equivalence if and only if $\mathcal{P}(\varphi) = f_\ast \mathcal{P}(\varphi) = \mathcal{P}(f \varphi f^{-1})$.
By Proposition \ref{prop_charac_periodic_orbits}, $\mathcal{P}(\varphi)$ can be characterized as the set of $\gamma \in \pi_1(M)$ such that every $\gamma$-invariant curve $\tilde{c} \subset \tilde{M}$ has bounded projection to $\Lambda^s(\varphi)$.   Thus, $f$ is isotopic to a self-orbit equivalence if and only if $f_*$ preserves this set. 
Proposition \ref{prop_bounded} characterizes this set in terms of the vanishing of $\disp_\phi(c,f)$.
\end{proof}

\begin{proof}[Proof of Proposition \ref{prop_bounded}]
Fix $f = D_{\beta_1} \circ \ldots \circ D_{\beta_k}$,
where $D_{\beta_i}$ is a Dehn twist on $T_i$, and let $\tilde{f}$ be a lift of $f$ to $\tilde{M}$ chosen so that $\tilde{f}$ fixes some point $\tilde{x}$ on $\tilde{c}$. Recall that we assumed, without loss of generality, that all the $T_i$ are quasi-transverse.

Let $\gamma \in \pi_1(M)$ represent a periodic orbit $c$ of $\varphi$.
As a first trivial case, suppose $c$ has no transverse intersections with any torus $T_i$, so it is either contained in a single torus or disjoint from all of them.  Let $\tilde{c}$
be a lift of $c$.  Note that $c$ is isotopic to a curve $c'$ disjoint from the support of $f$; lifting this isotopy means the lift $\tilde{c}$ is isotopic to a lift $\tilde{c'}$ such that, after applying some deck transformation $g$ of the cover, this is disjoint from the support of $\tilde{f}$.  Thus, $\tilde{f} (g \tilde{c})$ is uniformly bounded distance away from 
$\tilde{f}( g \tilde{c'}) = g\tilde{c'}$, and this is uniformly bounded distance away from $g \tilde{c}$, showing that $g \tilde{c}$ (and hence also $\tilde{c}$) has bounded projection.  

For the case where $c$ intersects some $T_i$ transversely, we may without loss of generality choose the product neighborhoods of the tori $T_i$ in the definition of $D_{\beta_i}$ small enough so that every time $c$ crosses a torus $T_i$, it enters one side $T_i \times \{\pm 1\}$ and leaves the other,  $T_i \times \{\mp 1\}$.  
Consider the positive-time ray $r = \left\{\varphi^t(\tilde{x}) , t\geq 0\right\} \subset \wt c$, where $\tilde{x}$ is as before a point fixed by $\tilde{f}$.  We will show that the projection of $\wt f(r)$ to $\Lambda^s(\varphi)$ is bounded. Reversing the argument (using unstable leaves instead of stable),
will show that the projection of $\wt f(\wt c)$ to $\Lambda^s(\varphi)$ is also bounded, so we do only the forward case.  

Between $\wt x$ and $\gamma \wt x$, the ray $r$ intersects a finite number of lifts of the tori $T_i$ on which the Dehn twists $D_{\beta_i}$ are supported. Let
$\cT_1, \cdots, \cT_n$ denote these lifts, indexed along the path of $r$ so that $r$ first intersects $\cT_1$ after $\wt x$.  Note that two distinct lifts $\cT_i$ and $\cT_j$ may project to the same torus  in $M$, this will happen whenever the orbit $c$ crosses the same torus twice. 
Since the $T_i$ are quasi-transverse tori, each $\cT_j$ projects to a string of lozenges $\cC_j$. Let $L_0^{(1)}$ be the lozenge in $\cC_1$ containing the projection of $\wt c$.
The main technical part of the proof is the following claim.

\begin{claim} \label{claim_zero_weighted_intersection}
Let $r_0$ denote the segment of $r$ between $\wt x$ and $\gamma \wt x$.  
\begin{enumerate}
\item \label{item_first_part_of_claim}  The stable leaf of $\wt f (\gamma\wt x)$ intersects the lozenge $\eta^{\disp_\varphi(c, f)}(L_0^{(1)})$.
\item \label{item_second_part_of_claim} There exists $N>0$, depending only on $f$, such that the stable leaf saturation of $\wt f( r_0)$ intersects the chain of lozenges $\cC_1$ only between $\eta^{-N}(L_0^{(1)})$ and $\eta^{N}(L_0^{(1)})$.
\end{enumerate} 
\end{claim}

Given this claim, the proof of Proposition \ref{prop_bounded} can be finished quickly, by considering the positive iterates of $r_0$ under $\gamma$.  
Let us assume the claim for the moment and use it to derive the conclusion of Proposition \ref{prop_bounded}.  

We use the fact that $r = \bigcup_{i=1}^\infty \gamma^i(r_0)$.   For the first direction, suppose that $\disp_\varphi(c, f)=0$.   Fix a segment $r_i = \gamma^i(r_0)$.  Then $\gamma^{-i}(r_i) = r_0$ so by Claim \ref{claim_zero_weighted_intersection}, the stable leaf of $\wt f (r_i) = f_*(\gamma^i) \wt{f}(r_0)$ intersects the lozenge $f_*(\gamma^i) (L_0^{(1)})$, and $\wt f( r_i)$ intersects the chain of lozenges $\cC_1$ only between $f_*(\gamma^i) \eta^{-N}(L_0^{(1)})$ and $f_*(\gamma^i)  \eta^{N}(L_0^{(1)})$.  But Observation \ref{obs:consec_lozenge} implies that $f_*(\gamma^i)(L_0^{(1)})$ is contained in the stable saturation of $L_0^{(1)}$, thus, $\wt f (r_i)$ is contained in the union of the stable saturation of leaves between $\eta^{-N}(L_0^{(1)})$ and $\eta^{N}(L_0^{(1)})$, for all $i$, hence $r$ has bounded projection to $\Lambda^s(\varphi)$.   As remarked above, applying the same argument to unstable leaves and using the negative time ray shows that $c$ has bounded projection.  

Conversely, if $\iota=i_\varphi(c, f)\neq 0$, then the argument above shows that $\hfs\left(\wt f (\gamma^n\wt x)\right)$ intersects $\eta^{n\iota}(L_1^{(1)})$, thus the projection of $\wt f(r)$ to $\Lambda^s(\varphi)$ is unbounded.
\end{proof} 
So to finish the proof of the Proposition, we only need to prove Claim \ref{claim_zero_weighted_intersection}.  

  \begin{figure}
   \labellist 
  \small\hair 2pt
     \pinlabel $\cT_1$ at 80 65 
    \pinlabel $\cT_2$ at 300 30 
    \pinlabel $h_1(\cT_2)$ at 200 280 
    \pinlabel $r$ at 200 138 
   \pinlabel $\wt{f}(r)$ at 180 200 
   \pinlabel $I_0$ at 530 95 
   \pinlabel $N_1$ at 505 180
   \pinlabel $N_2$ at 695 40
   \pinlabel $h_1(N_2)$ at 585 240
   \pinlabel $I_1$ at 665 105
   \pinlabel $h_1(I_1)$ at 625 175
   \pinlabel $I_2$ at 770 85
   \endlabellist
     \centerline{ \mbox{
 \includegraphics[width = 5.5in]{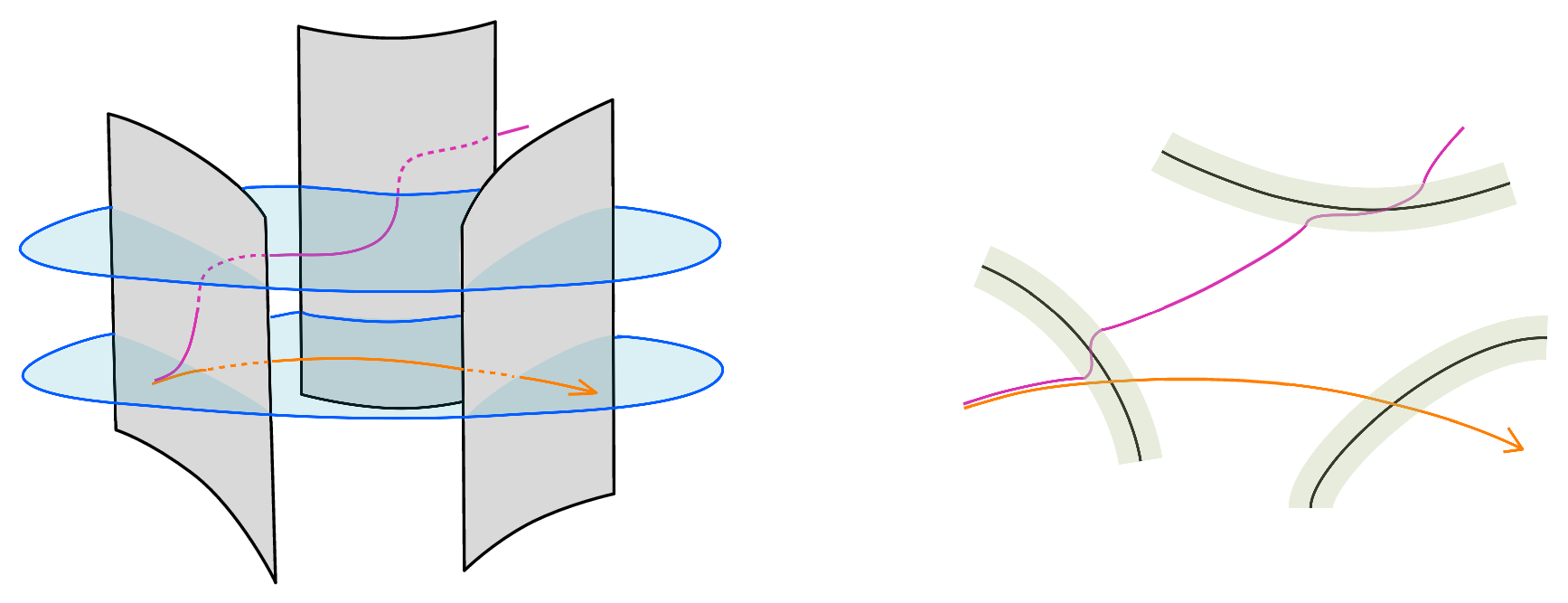}}}
 \caption{The action of the lift of a Dehn twist on a ray, shown in $\wt M$ on the left and schematically indicating intersections with lifted neighborhoods of tori on the right.}
  \label{fig:self-sim}
  \end{figure}

\begin{proof}[Proof of Claim \ref{claim_zero_weighted_intersection}]
Recall that $f$ has support in disjoint small neighborhoods of the quasi-transverse tori in $M$ which lift to disjoint neighborhoods of $\cT_j$ in $\wt M$, and we use $L^{(j)}_k$ to denote the string of lozenges associated to $\cT_j$.   Let $\wt N_j$ denote the lifted neighborhood containing $\cT_j$. We use these neighborhoods to split the ray $r_0$ into a union of intervals $I_j$ and $J_j$ where $J_j := \wt N_j \cap r_0$, and $I_j$ denotes the connected component of $r_0$ between $ \wt N_{j-1}$ and  $\wt N_j$.  The interval $I_0$ is the segment of $r_0$ from $\wt x$ to $\wt N_1$.

For each  $\cT_i$, let $t_i$ denote the translation number of the Dehn twist on the corresponding torus.  
Notice that, since $f$ is supported on the union of the projection of the neighborhoods $\wt N_i$ to $M$, it follows that for any $j$, there exists $h_j\in \pi_1(M)$ such that $\wt f(I_j) = h_j (I_j) \subset h_j \wt c$. Our choice of lift implies that $h_0$ is the identity.    

To demonstrate the first statement of the claim, we will first establish the fact that, if $\wt f(I_{i-1}) = h_{i-1} (I_{i-1})$ intersects a lozenge $h_{i-1} L^{(i)}_{k}$, then 
the stable leaf of $\wt f(I_{i})$ intersects the translate of $h_{i-1} L^{(i)}_{k}$ by $\eta^{\epsilon_{i} 2 t_{i}}$.  Here is the proof of the fact:  Fix some $i$.  The segment $\wt f(J_i)$ is obtained from $h_{i-1}(J_i)$ by applying the (unique) lift of the Dehn twist supported on the projection of $\cT_i$ that preserves $h_{i-1} \cT_i$ and fixes $h_{i-1}(I_{i-1})$.    
The (projection to the orbit space of the) endpoint shared by $\wt f(I_{i-1}) = h_{i-1}(I_{i-1})$ and $\wt f (J_i)$ lies in some lozenge $L^{(i-1)}_k$ associated to $h_{i-2}( \cT_{i-1})$.  By Observation \ref{obs:consec_lozenge}, the lozenge $h_{i-1}( L^{(i)}_{k'})$ for $h_{i-1}( \cT_i)$ that also contains this point is such that $h_{i-1} (L^{(i)}_{k'})$ is contained in the saturation by stable leaves of $h_{i-2} (L^{(i-1)}_{k})$.
Applying Lemma \ref{lem_action_of_one_dehn_lift}, we conclude that after applying the lifted Dehn twist to $h_{i-1}(J_i)$ (see Figure \ref{fig:self-sim}), the image of its other endpoint, which is also an endpoint of $\wt f(I_i)$, lies in $\eta^{\epsilon_i 2 t_i} h_{i-1}( L^{(i))}_{k'}$.  This is contained in the saturation by stable leaves of $\eta^{\epsilon_i 2 t_i} h_{i-2}( L^{(i-1)}_{k})$, which proves the fact.

To deduce (\ref{item_first_part_of_claim}) using the fact, we apply it iteratively, observing first that the stable leaf of $\wt f (I_1)$ intersects $\eta^{\epsilon_1 2 t_1} (L^{(1)}_0)$ and some lozenge $h_1 (L^{(2)}_k)$, and the stable saturation of $L^{(2)}_k$ is contained in $\eta^{\epsilon_1 2 t_1} L^{(1)}_0$.  Thus, $\wt f (I_2)$ has stable leaf intersecting $\eta^{\epsilon_2 2 t_2} h_1 (L^{(2)}_k)$, and thus $\eta^{\epsilon_1 2 t_1} \eta^{\epsilon_2 2 t_2} (L^{(1)}_0)$.  Continuing iteratively, we conclude that $\wt f(\gamma \wt x)$ has stable leaf intersecting the translate of $L^{(1)}_0$ by $\eta^{d_\phi(c, f)}$.

Now, the second part of the claim follows directly from the application of the second part of Lemma \ref{lem_action_of_one_dehn_lift} in the proof above by choosing 
\[
 N = \sum_{i=1}^k 2|t_i|. \qedhere
\] 
\end{proof}

\subsection{Application: classification of self-orbit equivalences in special cases} \label{sec:special_cases} 

We will now use the criterion given by Theorem \ref{thm_criterion} to describe the isotopy class of self orbit equivalences in some special cases, starting with the proof of Corollaries \ref{cor_horizontal_twists} and \ref{cor_non_horizontal_twists}.

\begin{proof}[Proof of Corollary \ref{cor_horizontal_twists}]
 Suppose that $D_\alpha$ is a Dehn twist on a torus $T$ in a direction $\alpha$ that is represented by a periodic orbit of $\varphi$. Then the translation number of $\alpha$ is zero, and thus, for any periodic orbit $c$ of $\varphi$, the displacement of $c$ by $D_\alpha$ is zero. Therefore, the criterion of Theorem \ref{thm_criterion} implies that any such Dehn twist is isotopic to a self orbit equivalence of $\varphi$, which proves the corollary.
\end{proof}

\begin{proof}[Proof of Corollary \ref{cor_non_horizontal_twists}]
 Let $T$ be a quasi-transverse torus in $M$ and $D_\beta$ a Dehn twist on $T$ with nonzero translation number. 
 Suppose first that $T$ is separating. Consider a periodic orbit $c$ of $\varphi$. Then either $c$ does not intersect $T$ or it intersects $c$ an even number of times with alternating signs.  In either case, $\disp_\varphi(c,D_\beta)= 0$. Thus $D_\beta$ is isotopic to a self orbit equivalence by Theorem \ref{thm_criterion}.

 Now, if we assume instead that $T$ is non-separating, we claim that we can find a periodic orbit $c$ of $\varphi$ such that its intersections with $T$ are always in the same transverse direction, and therefore are assigned the same sign. To do this, take a closed oriented loop based in $T$ in $M$ intersecting $T$ exactly once, transversely, at the basepoint (and therefore inducing a transverse orientation of $T$).   Lift this loop to a path in $\wt M$ with endpoints in lifts $\wt T_0$ and $\wt T_1$ of $T$, and iteratively choose successive lifts $\wt T_i$.   Then for each $i$, $\wt T_i$ separates $\wt T_{i-1}$ from $\wt T_{i+1}$, and no lifts of $T$ separates $\wt T_{i}$ from $\wt T_{i+1}$.   
 
Fix $n$ large, and let $\wt d$ be a segment of an orbit in $\wt M$ with endpoints in $\wt T_0$ and $\wt T_n$.  Such an orbit always exists because the flow is $\bR$-covered and skew. 
If $n$ is chosen large enough, $\wt d$ projects down to an orbit segment in $M$ that will contain points close enough to satisfy the conditions of the Anosov closing lemma. Hence we obtain a periodic orbit $c$ such that all its intersections with $T$ have the same transverse orientation.  We conclude that $\disp_\varphi(c,D_\beta)$ is non-zero, which, by the criterion implies that $D_\beta$ is not isotopic to a self orbit equivalence.
\end{proof}

The proof of this corollary gives an example of a general strategy to identify self orbit equivalences by understanding the intersection of periodic orbits with quasi-transverse tori.   One could broaden this to give a characterization of self orbit equivalences of a given flow in terms of the configuration of tori in $M$ (the JSJ graph and geometry of the pieces) and translation numbers of periodic orbits in tori.   Working through the details of this is beyond the scope of this article; instead we illustrate our criterion with some examples where we can virtually describe the group of self orbit equivalences in the mapping class group.

For both of the following statements, we assume $\varphi$ is a transversally orientable skew Anosov flow on $M$.

\begin{theorem} \label{thm_oneJSJpiece}
Suppose that the JSJ decomposition of $M$ has a single piece which is atoroidal, glued to itself along $n$ boundary tori $T_1, \dots, T_n$. 
 Let $D_{\mathrm{per}}$ be the group generated by Dehn twists on the $T_i$ in the directions of the periodic orbits of $\varphi$.
 
Let $D$ be the finite index subgroup of the mapping class group generated by Dehn twists on tori.  Then $D_{\mathrm{per}}$ is the set of self orbit equivalences in $D$.
\end{theorem}

\begin{theorem} \label{thm_linear_graph}
 Suppose that each torus of the JSJ decomposition of $M$ is separating.  Let $D$ be the finite index subgroup of the mapping class generated by Dehn twists on tori.  Then the set of self orbit equivalences in $D$ is equal to the group generated by Dehn twists in the directions of periodic orbits inside Seifert pieces, together with any Dehn twists on the JSJ tori.
\end{theorem}

\begin{rem}
Using Handel-Thurston surgery and Foulon--Hasselblatt Dehn surgeries on periodic orbits of geodesic flows, one can easily construct many examples of skew Anosov flows on manifolds as in the statements of Theorem \ref{thm_oneJSJpiece} and   \ref{thm_linear_graph}.
\end{rem}

The proofs of both statements use the fact that the group generated by Dehn twists on tori has finite index in the mapping class group of any orientable $3$-manifold. (See \cite{JoBook} for explanation and history of the proof).  

\begin{proof}[Proof of Theorem \ref{thm_oneJSJpiece}]
 By Corollary \ref{cor_horizontal_twists}, any isotopy class $[f]\in D_{\mathrm{per}}$ is represented by a self orbit equivalence.
 For the other containment, suppose that $f$ is a self orbit equivalence contained in $D$.  
We assume for a contradiction that $f$ cannot be written as a product of elements in $D_{\mathrm{per}}$. Since elements of $D_{\mathrm{per}}$ are self orbit equivalences, up to composition with such a map we can assume that $f = D_{\beta_1} \circ \dots \circ D_{\beta_k}$, where $\trans_{\varphi}(\beta_i) \neq 0$ for all $i$.  In fact, we will only use that one of these has nonzero translation number.  

We now produce a periodic $c$ such that $\disp(c,f) \neq 0$, leading to a contradiction by Theorem \ref{thm_criterion}.
 Reindexing if needed, suppose $T_1$ is a torus in the support of $f$, with associated Dehn twist $D_{\beta_1}$. By hypothesis, $T_1$ is non separating in $M \smallsetminus \left(T_2 \cup \dots \cup T_n\right)$. By considering lifts of curves as in the proof of Corollary \ref{cor_non_horizontal_twists}, one can find a periodic orbit $c$ of $\varphi$ so that intersects only $T_1$, and each intersection point has the same orientation.
 Thus, $\disp(c,f) = \disp(c, D_{\beta_1}) = \pm \trans_{\varphi}(\beta_1) \neq 0$, contradicting Theorem \ref{thm_criterion}.
\end{proof}

\begin{proof}[Proof of Theorem \ref{thm_linear_graph}]
 Given the assumption, all JSJ tori are separating, so Corollary \ref{cor_non_horizontal_twists} show that any product of Dehn twists along them is isotopic to a self orbit equivalence. This, together with Corollary \ref{cor_horizontal_twists}, shows that any element of the group generated by Dehn twists in the direction of periodic orbits together with Dehn twists on the JSJ tori is isotopic to a self orbit equivalence. 
 
We are left to show that any element of the finite index subgroup generated by Dehn twists that is a self orbit equivalence is of this form.  
Let $f$ be a self orbit equivalence generated by Dehn twists, thus $f$ preserves each Seifert piece.  Fix a seifert piece $S$ and 
let $\Sigma$ denote the base orbifold of $S$, and $g$ the restriction of $f$ to $\Sigma$.  

Up to composing $f$ with some Dehn twists in the direction of periodic orbits on $S$ and Dehn twists on the boundary of $S$, we can assume that $g$ projects to the identity in the mapping class group of the base orbifold $\Sigma$ (see \cite{JoBook}, or \cite[section 4.7]{BGHP}).  
Thus, for any $\gamma \in \pi_1(S)$ that is obtained as a lift of an element of $\pi_1(\Sigma)$ that is not homotopic to a boundary component of $\Sigma$, there exists $k \in \mathbb{Z}$ such that $g_\ast (\gamma) = \gamma s^k$, where $s$ is the element of $\pi_1(S)$ representing a regular fiber of the Seifert fibration of $S$.    

For any such $\gamma$, the group $H=\langle \gamma, s\rangle$ is a subgroup of $\pi_1(M)$ homeomorphic to $\bZ^2$, and hence there exists a (unique) string of lozenges $\cC$ in $\orb$ that is preserved by $H$.  The permutation action of $H$ on this string gives a homomorphism $H \to \bZ$, thus there exists some nontrivial $\gamma' \in H$ whose action on $\orb$ fixes each corner of $\cC$,
equivalently, the conjugacy class of $\gamma'$ is represented by the periodic orbits of $\varphi$ that project to the corners of $\cC$.   

Since the weak foliations are transverse to the Seifert fibration, $s$ acts as a translation on the corners of $\cC$ (see \cite{BarbotFenley_pA_toroidal}).
In particular, if $k \neq 0$, then $g_\ast(\gamma') = \gamma' s^k$ does not fix a point of $\orb$, hence is not represented by a periodic orbit of $\varphi$.  This contradicts the fact that $g$ was the restriction of a self orbit equivalence.  Thus, $k=0$, and since the choice of $\gamma$ was arbitrary, we conclude that $g$ is isotopic to the identity.

Applying this argument to the restriction of $f$ to each Seifert piece, we conclude that $f$ is a product of Dehn twists in the direction of periodic orbits and Dehn twists on the JSJ tori, as claimed.
\end{proof}

\begin{rem}
The proof of Theorem \ref{thm_linear_graph} gives a virtual characterization of the self orbit equivalences of a given flow that fix a Seifert piece and are isotopic to identity on its boundary: they are exactly the isotopy classes of the group generated by Dehn twists in the direction of periodic orbits in the Seifert piece.
\end{rem}

\section{Application to contact flows} \label{sec:contact}
We will now apply some results of contact geometry to obtain the reverse direction of Theorem \ref{thm_isotopy_contact}, as well as its corollaries.
Recall that a (co-orientable) contact structure $\xi$ is \emph{Anosov} if $\xi$ admits an Anosov Reeb flow, i.e., there exists a $1$-form $\alpha$ such that $\xi = \ker \alpha$ and the Reeb flow of $\alpha$ is Anosov.

We will use as a black box the theory of cylindrical contact homology. Introduced in a more general context by Eliashberg--Givental--Hofer in \cite{EGH}, it has been proven to be well-defined for dynamically convex contact structures by Hutchings and Nelson \cite{HN}. This context includes the case of Anosov contact forms: If the Reeb flow of a contact $1$-form $\alpha$ is Anosov, then it is automatically non-degenerate and, since Anosov flows have no contractible orbits, $\alpha$ is dynamically convex (see also, e.g.~\cite{MP}).
 Some of the fundamental results in this theory can be summarized as follows:
 \begin{theorem} 
 If $\alpha_1$ and $\alpha_2$ are nondegenerate dynamically convex contact forms on a closed, hypertight, contact 3-manifold $(M ,\xi)$, then $C\bH^{\Lambda}_{\text{cyl}}(\alpha_1) \cong C\bH^{\Lambda}_{\text{cyl}}(\alpha_2)$ for any set $\Lambda$ of free homotopy classes in $M$.  The space $C\bH^{\Lambda}_{\text{cyl}}(\alpha_i)$ is a $\mathbb{Q}$-vector space, it is the homology of a complex generated by the periodic orbits of the Reeb flow of $\alpha_i$ in the free homotopy classes belonging to $\Lambda$. 
 \end{theorem}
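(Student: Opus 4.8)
The plan is to establish this as the standard invariance statement for cylindrical contact homology in the \emph{hypertight} setting, where the relevant moduli spaces of holomorphic cylinders have good compactness. Recall that for a nondegenerate contact form $\alpha_i$ one fixes a generic almost complex structure $J_i$ on the symplectization $(\R\times M, d(e^t\alpha_i))$ compatible with $\alpha_i$, and defines $C\bH^{\Lambda}_{\text{cyl}}(\alpha_i)$ as the homology of the $\mathbb{Q}$-vector space freely generated by the good periodic Reeb orbits of $\alpha_i$ whose free homotopy classes lie in $\Lambda$, graded by the Conley--Zehnder index, with differential counting (with signs and covering multiplicities) the index-one $J_i$-holomorphic cylinders modulo the $\R$-translation action on the target. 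I would first record why this is well defined when $\alpha_i$ is hypertight: since $\alpha_i$ has no contractible periodic Reeb orbit, in the SFT compactness theorem applied to a sequence of finite-energy $J_i$-holomorphic cylinders no holomorphic plane can bubble off (its asymptotic orbit would be contractible), so a limiting holomorphic building is a chain of cylinders. Combined with transversality for the relevant low-index configurations, this gives $\partial^2=0$, and the whole construction splits over free homotopy classes because a holomorphic cylinder joins orbits in the same class.

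Next comes the invariance step. Since $\xi=\ker\alpha_1=\ker\alpha_2$, I would build an exact symplectic cobordism $(W,\omega)$ whose positive end is the symplectization of $\alpha_1$ and whose negative end is that of $\alpha_2$ (such a cobordism exists precisely because $\alpha_1$ and $\alpha_2$ induce the same cooriented contact structure; concretely one realizes $W$ as the region of $\R\times M$ between the graphs of two functions), perturbing if necessary so all data are admissible. Equipping $W$ with a cobordism-compatible almost complex structure and counting index-zero holomorphic cylinders in $W$ defines a chain map $\Phi_{12}\colon C\bH^{\Lambda}_{\text{cyl}}(\alpha_1)\to C\bH^{\Lambda}_{\text{cyl}}(\alpha_2)$; hypertightness of the two ends again forbids plane-bubbling in any level of a limiting building, so the relevant $1$-dimensional moduli spaces break only into broken cylinders and $\Phi_{12}$ is a genuine chain map, still respecting the decomposition along $\Lambda$.

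To conclude that $\Phi_{12}$ is an isomorphism I would run the usual formal argument: compose $\Phi_{12}$ with the map $\Phi_{21}$ associated to the reverse cobordism, compare the glued cobordism with the trivial cobordism via a homotopy of cobordism data, and count index-$(-1)$ cylinders in the resulting $1$-parameter family to produce a chain homotopy between $\Phi_{21}\circ\Phi_{12}$ and the identity (and symmetrically for $\Phi_{12}\circ\Phi_{21}$). Hence $\Phi_{12}$ is a chain homotopy equivalence and induces the claimed isomorphism $C\bH^{\Lambda}_{\text{cyl}}(\alpha_1)\cong C\bH^{\Lambda}_{\text{cyl}}(\alpha_2)$ for every $\Lambda$.

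The main obstacle is entirely analytic rather than formal: attaining the transversality needed to count the possibly multiply covered holomorphic cylinders in the four-dimensional symplectizations and in $W$, so that $\partial^2=0$, $\Phi_{12}$ is a chain map, and the required chain homotopies exist with the correct signs. In dimension three this is handled by Wendl-type automatic transversality for cylinders together with the passage to good orbits (and, where needed, $S^1$-equivariant or abstract perturbations in the style of Hutchings--Nelson and Bao--Honda); setting up these foundations rigorously is precisely the input we are importing here as a black box.
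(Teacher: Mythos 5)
This statement is not proved in the paper at all: it is quoted verbatim as an external result, cited to [FHV, Theorem~4.1] and explicitly used ``as a black box'' in the proof of Theorem~\ref{thm_isotopy_contact}. So there is no internal argument to compare yours against; the only fair comparison is with the actual literature on cylindrical contact homology in the hypertight setting.

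As an outline, your sketch follows the standard invariance scheme and correctly isolates the role of hypertightness (no contractible Reeb orbits, hence no plane bubbling in SFT limits, hence buildings made of cylinders and a splitting of the complex along free homotopy classes). But as a proof it has a genuine gap, which you partly acknowledge and then wave away: the transversality needed for $\partial^2=0$, for the cobordism map $\Phi_{12}$, and especially for the chain-homotopy step is \emph{not} supplied by ``Wendl-type automatic transversality plus good orbits.'' Automatic transversality in dimension four does not cover index-$0$ multiply covered cylinders in the cobordism $W$, nor the index-$(-1)$ cylinders appearing in the one-parameter families used to build the chain homotopy; these are exactly the configurations where the naive count breaks down, and handling them (via $S^1$-equivariant or abstract perturbations as in Bao--Honda, Hutchings--Nelson, or Pardon's VFC framework, or via the Morse--Bott methods used in [FHV]) is the substantive content of the cited theorem. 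By declaring that machinery to be ``the input we are importing as a black box,'' you are in effect importing the theorem you set out to prove, so the proposal is a correct roadmap but not an independent proof. Two smaller points: the grading by the Conley--Zehnder index for non-contractible orbits requires a choice of trivialization of $\xi$ along a reference loop in each class of $\Lambda$ (only the relative grading is canonical), and the existence of the exact cobordism between the two ends requires rescaling one of the forms (writing $\alpha_2=f\alpha_1$ and taking a large constant multiple), a scaling which must then be argued not to change the homology; both are routine but should be said.
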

 
 Having the cylindrical contact homology groups associated to $\alpha_1$ and $\alpha_2$ being isomorphic is not quite enough for our purpose: what we will need in order to apply Theorem \ref{thm_main} is to obtain that the chain complexes themselves are equal, when $\alpha_1$ and $\alpha_2$ are Anosov contact forms. This follows from the fact that the differential in the chain complex is trivial. That result was proven by Macarini and Paternain \cite[Theorem 2.1]{MP} for any Anosov contact structure (and independantly by Vaugon, see \cite{FHV}, with the additional assumption that the foliations are orientable).

\begin{proof}[Proof of Theorem \ref{thm_isotopy_contact}, reverse implication]
 Let $\varphi_1$, $\varphi_2$ be two contact Anosov flows with respective contact forms $\alpha_1$ and $\alpha_2$ and contact structures $\xi_1= \ker \alpha_1$ and $\xi_2 = \ker \alpha_2$. Suppose that $\xi_1$ and $\xi_2$ are contactomorphic. Then there exists a diffeomorphism $g\colon M \to M$ such that $g_\ast \xi_1 = \xi_2$.

In our situation, the $1$-forms $g^\ast \alpha_1$ and $\alpha_2$ are two contact forms of $(M,\xi_2)$ with Anosov Reeb flows. Applying the theorem on cylindrical contact homology above, where $\Lambda$ runs over all possible free homotopy classes in $\pi_1(M)$, we deduce that each homotopy class represented by a periodic orbit of the Reeb flow of $g^\ast \alpha_1$, is also represented by a periodic orbit of the Reeb flow, $\varphi_2$.

The Reeb flow of $g^\ast \alpha_1$ is $g^{-1}\circ \varphi_1^t \circ g$.  
 Thus, by Theorem \ref{thm_main}, $g^{-1}\circ \varphi_1 \circ g$ and $\varphi_2$ are isotopically equivalent. Equivalently, $\varphi_1$ and $\varphi_2$ are orbit equivalent.
 If additionally $g$ was isotopic to the identity, we conclude that $\varphi_1$ and $\varphi_2$ are isotopically equivalent.  This proves the theorem. 
\end{proof}

Using this direction of Theorem \ref{thm_isotopy_contact}, together with the coarse classification of contact structures in dimension 3 of Colin--Giroux--Honda, we can quickly deduce Theorem \ref{thm:finite_contact}.
\begin{proof}[Proof of Theorem \ref{thm:finite_contact}]
 By \cite[Th\'eor\`eme 6]{CGH}, on any irreducible $3$-manifold, there exist at most finitely many non-contactomorphic contact structures with bounded Giroux torsion.
 Now Proposition \ref{prop_zero_torsion} shows that the contact structure of any contact Anosov flow has zero Giroux torsion. Thus Theorem \ref{thm_isotopy_contact} implies the result\footnote{When $M$ is hyperbolic, one does not need Bowden's result, thanks to \cite[Th\'eor\`eme 2]{CGH}}.
\end{proof}

 
\appendix
\section{Further applications to contact topology\\ With \textsc{Jonathan Bowden}}
In this appendix, we show that any Anosov contact structure has zero Giroux torsion, prove the converse to Theorem \ref{thm_isotopy_contact}, and then use this to obtain new results about contact structures.  

\begin{proposition}\label{prop_zero_torsion}
 Let $\xi$ be an Anosov contact structure on a $3$-manifold $M$. Then $\xi$ has zero Giroux torsion. In fact, a double cover of $M$ will be strongly symplectically fillable.
\end{proposition}

\begin{proof} 
Let $\alpha$ be a contact $1$-form such that $\ker(\alpha) = \xi$ and such that the Reeb flow $\varphi$ of $\alpha$ is Anosov.  Let $X$ be the Reeb vector field.
Consider the $4$-manifold $M \times [-1, 1]$ with symplectic form $\omega = d(e^t\alpha)$ where $t$ is the coordinate on $[-1,1]$.
Then $\xi = \ker (\alpha)$ is a contact structure that we can assume without loss of generality to be positive and, since $\omega|_{M\times \{1\}} = e^1 d\alpha$, the form $\omega$ is non zero on $\xi$.

Up to taking a double cover, we can assume that $\varphi$ is transversally orientable, i.e.~its Anosov splitting is orientable.

Let $X^{ss}$ and $X^{uu}$ be vector fields in, respectively, the stable and unstable direction of the Anosov splitting of $X$, with orientations chosen so that the plane spanned by  $X^{ss}+ X^{uu}$ and $X$ defines a (co-orientable) contact structure $\xi_-$ with {\em negative} orientation (see \cite{Mitsumatsu}).  Put this contact structure on $M\times \{-1\}$.  

Since the defining property of a contact form is open in the $C^1$ topology, 
we can take a $C^1$-small approximation $\tilde\xi_-$ of $\xi_-$ such that $X$ is transverse to $\tilde\xi_-$.
Then, $\omega$ is non zero on $\tilde\xi_-$ (since $\omega|_{M\times \{-1\}} = e^{-1} d\alpha$, so its kernel is spanned by $X$).

This gives us a weak semi-filling 
of $(M\times \{1\}, \xi)$. By \cite[Corollary 1.4]{Eliashberg}, a weak semi-filling can be capped off to give a weak filling.  Since $\omega$ is exact, by \cite[Proposition 4.1]{Eliashberg}, this weak filling can be modified to give a strong filling (that result was independantly obtained by Etnyre in \cite{Etnyre}).
Now, Gay \cite[Corollary 3]{Gay} proved that any contact plane field that is strongly fillable has zero Giroux torsion. This proves the proposition for possibly a double cover of $M$.

Now the Giroux torsion of a contact structure cannot decrease under finite covers, since any component of a finite cover of a Giroux torsion domain is again a Giroux torsion domain. So, in any case, a contact structure $\xi$ has zero Giroux torsion if and only if any finite lifts of it has zero Giroux torsion. This proves the proposition for the original $\xi$.
\end{proof} 

Thanks to Giroux's correspondence between open books and contact structures \cite{Giroux}, we can prove the following, giving the second implication needed for the statement of Theorem \ref{thm_isotopy_contact}.  

\begin{theorem}\label{thm_converse_contact}
Let $\varphi_1$ and $\varphi_2$ be two contact Anosov flows with respective contact structures $\xi_1$ and $\xi_2$.
If $\varphi_1$ and $\varphi_2$ are orbit equivalent (resp.~isotopically equivalent) then $\xi_1$ and $\xi_2$ are contactomorphic (resp.~isotopic).
\end{theorem}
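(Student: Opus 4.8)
The plan is to realize each $\xi_i$ as the contact structure \emph{supported} by an open book coming from a Birkhoff section of $\varphi_i$, and then to observe that an orbit equivalence carries such open books to one another, so that Giroux's correspondence \cite{Giroux} forces the supported contact structures to agree up to isotopy (resp. isomorphism).

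First I would record that a contact Anosov flow preserves the volume form $\alpha \wedge d\alpha$, hence is transitive, so by a theorem of Fried it admits a Birkhoff section $\Sigma_i$: a compact surface, embedded, transverse to $\varphi_i$ in its interior, with boundary a finite union of periodic orbits, and meeting every orbit of $\varphi_i$. Such a section determines an open book decomposition $\mathcal{B}_i$ of $M$ whose binding is $\partial\Sigma_i$, whose pages are the flow-translates of the interior of $\Sigma_i$, and whose monodromy is the first return map of $\varphi_i$. The crucial point is that $\mathcal{B}_i$ supports $\xi_i$: the form $\alpha_i$ is positive on the oriented binding orbits (they are flowlines and $\alpha_i(X_i)=1$, where $X_i$ is the Reeb field), and $d\alpha_i$ restricts to an area form on each page (a page is transverse to $X_i$, and $\mathbb{R}X_i = \ker d\alpha_i$), with orientations compatible with the flow direction. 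These are precisely the conditions for $\xi_i = \ker\alpha_i$ to be carried by $\mathcal{B}_i$, so by Giroux's uniqueness \cite{Giroux} the isotopy class of $\xi_i$ is determined by $\mathcal{B}_i$.

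Next, let $f\colon M\to M$ be an orbit equivalence from $\varphi_1$ to $\varphi_2$. Applying $f^{-1}$ to a Birkhoff section $\Sigma_2$ of $\varphi_2$ yields a surface that still meets every $\varphi_1$-orbit, is topologically transverse to $\varphi_1$ in its interior, and has boundary on periodic $\varphi_1$-orbits (closed orbits are sent to closed orbits since $f$ takes $\varphi_1$-orbits to $\varphi_2$-orbits); a $C^0$-small smoothing within the smooth $1$-dimensional orbit foliation of $\varphi_1$ turns it into a genuine Birkhoff section $\Sigma_1$. Moreover $f$ matches the two open books: it sends the flow-translates of $\Sigma_1$ to those of $\Sigma_2$ and conjugates the first return maps, so $f$ is an isomorphism from $\mathcal{B}_1$ to an open book isotopic to $\mathcal{B}_2$. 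Hence $f_\ast\xi_1$ is isotopic to $\xi_2$, i.e. $\xi_1$ and $\xi_2$ are isomorphic; and if $f$ is isotopic to the identity then $\xi_1 \simeq f_\ast\xi_1 \simeq \xi_2$ are isotopic, which is the desired conclusion.

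The main obstacle I anticipate is regularity: orbit equivalences of Anosov flows are a priori only H\"older, so one must check that $f$ genuinely induces an isomorphism of the \emph{smooth} open book decompositions. The resolution is that all the relevant pieces — the binding (a finite link), the pages (surfaces), and the monodromy (a return map) — may be taken smooth, while $f$ only provides the underlying topological identification, which in dimension $3$ upgrades to a diffeomorphism in the same isotopy class by smoothing theory; this is all Giroux's correspondence needs. A secondary technical point, which I would only sketch since it is standard, is that a Birkhoff section — which may wrap several times around a binding orbit — does define an honest open book whose supported contact structure is the Reeb contact structure.
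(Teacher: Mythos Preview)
Your proposal is correct and follows essentially the same approach as the paper: both arguments transport a Birkhoff section through the orbit equivalence, smooth the resulting topological Birkhoff section, observe that the associated open book supports the Reeb contact structure, and invoke Giroux's uniqueness. The paper is slightly more explicit about two technical points you flagged but left as sketches---it cites Bonatti--Guelman \cite{BonattiGuelman} for the fact that a topological Birkhoff section can be isotoped along flow lines to a smooth \emph{tame} one (tameness being exactly what is needed to get an honest open book near the binding), and it handles the regularity of the orbit equivalence by smoothing $h$ to a diffeomorphism $g$ relative to the binding rather than appealing to abstract $3$-dimensional smoothing theory---but these are refinements of the same argument, not a different route.
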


To apply Giroux's result in our proof, we first need to recall a result about Birkhoff sections, starting with their definition. 
\begin{definition}
Let $\varphi$ be a flow on a $3$-manifold $M$. A surface $S$ is called a {\em topological Birkhoff section} of $\varphi$ if:
\begin{enumerate}[label=(\roman*)]
 \item $S$ is topologically immersed in $M$, and its interior is topologically embedded.
 \item the flow $\varphi$ is topologically transverse to the interior of $S$,
 \item each connected component of the boundary of $S$ consists of a periodic orbit of $\varphi$,
 \item every orbit of $\varphi$ intersects $S$, and
 \item the return time of $\varphi$ to the interior of $S$ is uniformly bounded above and away from zero.
\end{enumerate}

A topological Birkhoff section that is smoothly immersed is called a (smooth) Birkhoff section.
 
A Birkhoff section $S$ in an orientable manifold is called \emph{positive} if the orientations of all the  boundary orbits correspond to the orientation induced by the flow on the interior of $S$.
\end{definition}
Fried showed that any transitive Anosov flow has a Birkhoff section, which can in fact be taken to be embedded on the boundary as well. Moreover, \cite{BonattiGuelman} show that this section can be assumed to be {\em tame}, meaning that after an isotopy along flow lines one can assume that the restriction of the Birkhoff section to a small tubular neighborhood of any component of its boundary is a smooth helicoid.
Finally, Marty \cite{Marty} showed that any $\bR$-covered Anosov flow admits a \emph{positive} tame Birkhoff section. Moreover, an adaptation of Marty's proof can be seen to yield a section that is also embedded on the boundary \cite{Marty_private}\footnote{One could also run the proof below using \emph{rational} open books, which corresponds to positive Birkhoff sections with immersed boundaries, see \cite{BEV}.}. 
 Thus one obtains an open book supporting the contact structure. We recall:
\begin{definition}[Open Book]
Let  $B \subset M$ be an oriented link in a connected oriented manifold. Then an open book $(B,\theta)$ with binding $B$ is a fibration with connected oriented (non-compact) fibers $\theta\colon M \setminus B \to S^1$ such that in a neighborhood of each binding component the map is equivalent to the map given by projecting to the angular polar coordinate on $(D^2 \setminus \{0\}) \times S^1$, and the boundary of any fiber agrees with $B$ as an oriented link.
\end{definition}
\noindent The fibers of an open book are called {\em pages}.  Note that the tameness condition in \cite{BonattiGuelman} corresponds precisely to a Birkhoff section $S$ inducing an open book with binding the (oriented) periodic orbits $\partial S$.

\begin{definition}[Supporting Open Book]
An open book $(B,\theta)$ supports a (co-oriented) contact structure $(M,\xi)$ if there is a contact form $\alpha$ for $\xi$ such that 
\begin{itemize} 
\item The form $d\alpha$ is positive on the pages of $\theta$, which are oriented to be compatible with the binding;
\item The form $\alpha$ is positive on (each component of) $B$.
\end{itemize}
\end{definition}
\noindent The fundamental fact due to Giroux \cite{Giroux} is that any two contact structures supported by a {\em fixed} open book are isotopic through contact structures supported by the open book. This is essentially due to the fact that the above condition is convex, although some care is needed near the binding.  

In order to apply the above, we will use the following result of \cite{BonattiGuelman} 
\begin{lemma}[\cite{BonattiGuelman}, Lemma 4.16]\label{lem_smooth_Birkhoff}
 Let $S$ be a topological Birkhoff section of a flow $\varphi$, then $S$ can be isotoped along the orbits of $\varphi$ to a smooth tame Birkhoff section $S'$.
\end{lemma}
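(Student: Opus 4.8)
The plan is to produce the smooth tame section $S'$ in two stages — first fixing the model near the binding $\partial S$, then smoothing the compact complementary region — using only the allowed moves, namely isotopies along flow lines. Concretely, I look for a continuous function $\tau\colon S\to\mathbb R$ with $\tau\equiv 0$ on $\partial S$ and set $S'=\{\varphi^{\tau(x)}(x):x\in S\}$. Since every point of $S$ stays on its own orbit, the properties ``every orbit meets $S$'' and ``return time bounded above and away from zero'' pass automatically to $S'$, provided $\tau$ is taken small enough in the interior that the lower return-time bound survives; and the boundary orbits are unchanged. So the real content is obtaining smoothness and tameness, and then matching the two stages on their overlap.

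For the interior the argument is routine. Fix a small $\varepsilon>0$ and let $S_\varepsilon\subset\mathrm{int}(S)$ be $S$ with an open $\varepsilon$-neighborhood of $\partial S$ removed; this is a compact embedded surface transverse to $\varphi$, by topological transversality of the interior. Cover it by finitely many smooth flow boxes $U_j\cong D^2\times(-1,1)$ in which $\varphi=\partial_z$ and in which $S$ is the graph $z=h_j(w)$ of a continuous function $h_j$ on a subset of $D^2$. Approximating each $h_j$ by a smooth function and patching the resulting time-shifts with a partition of unity on $S$ gives a $C^0$-small $\tau$, supported away from $\partial S$, so that $S'$ is smoothly immersed on $S_\varepsilon$; transversality to the flow and embeddedness of the interior are open conditions on a compact surface, hence are preserved for a sufficiently small perturbation.

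The heart of the matter is the local model near a binding component $\gamma$. Choose a smooth ``suspension'' chart of a tubular neighborhood, $V\cong(D^2\times\mathbb R)/\!\sim$ with $(w,\theta)\sim(Q(w),\theta-1)$, $\gamma=\{0\}\times\mathbb R/\!\sim$, the flow equal to $\partial_\theta$, and $Q$ the smooth first-return diffeomorphism of $\varphi$ to a meridian disk with $Q(0)=0$. Lift $S\cap V$ to a surface $\widetilde S\subset D^2\times\mathbb R$, invariant under the deck group, transverse to $\partial_\theta$, with boundary on $\{0\}\times\mathbb R$. Intersecting $\widetilde S$ with a meridian disk $D^2\times\{\theta\}$ yields, for each $\theta$, a finite family of arcs limiting to the center, and here both halves of the return-time hypothesis are used essentially: the upper bound forces the surface to actually close up around $\gamma$ (no gaps), while the lower bound keeps the number of arcs finite — $q$, say, with $q\le p_\gamma/a$ where $p_\gamma$ is the period of $\gamma$ and $a$ the lower return-time bound — and prevents the arcs from spiralling infinitely tightly as $r\to0$. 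From this one extracts a topological normal form: after an ambient isotopy of $V$ along $\partial_\theta$, $\widetilde S$ becomes the standard smooth helicoid $\{q\theta=p\phi\}$ in polar coordinates $(r,\phi)$ on $D^2$, for the appropriate coprime $p\ge1$, which is smooth and tame and descends to $V$. Performing this binding step first on a slightly larger neighborhood $V'\supset V$, and then carrying out the interior smoothing above \emph{rel} a smooth collar near $\partial V'$ where $S$ is already helicoidal, assembles the local $\tau$'s into one globally defined $\tau$, giving the desired $S'$.

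The main obstacle is exactly this local classification near $\gamma$: one must show that \emph{any} topological surface transverse to $\varphi$ and limiting onto the periodic orbit as a boundary component, with return time bounded above and away from zero, is isotopic along the orbits to the standard helicoid — uniformly in the dynamical type of the return germ $Q$ (elliptic, hyperbolic, parabolic, \dots), since $Q$ enters the identification defining $V$. Some care is also needed to keep the isotopy continuous all the way to the binding and to make it match the interior smoothing; everything else is a standard flow-box-plus-partition-of-unity argument.
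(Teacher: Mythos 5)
The paper gives no argument for Lemma \ref{lem_smooth_Birkhoff} at all: it is imported verbatim from \cite{BonattiGuelman} (their Lemma 4.16) and used as a black box, so there is no internal proof to measure your attempt against; it can only be judged on its own terms. Judged that way, it is an outline rather than a proof, and the missing piece is precisely the content of the cited lemma. The step you yourself single out as ``the heart of the matter'' --- that near each binding component the lifted surface, topologically transverse to the flow, limiting onto the periodic orbit, with return times bounded above and away from zero, can be carried by an isotopy \emph{along flow lines} to the standard smooth helicoid, uniformly in the germ of the return map $Q$ --- appears in your text only as an assertion (``From this one extracts a topological normal form\dots''), immediately followed by the admission that establishing it is the main obstacle. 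But tameness \emph{is} that normal form; without an actual argument there (controlling the spiralling of the arcs $\widetilde S\cap(D^2\times\{\theta\})$ as $r\to 0$, producing a flow-line isotopy continuous up to the binding, and ruling out pathological accumulation compatible with the return-time bounds), nothing has been proved beyond the easy interior smoothing.

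Two smaller points would also need attention even granting the binding model. First, the interior patching is glossed: $S$ is only a topological surface, so the partition-of-unity weights cannot be taken ``on $S$'' as smooth data; one must instead work with smooth functions of the flow-box (or local transversal) coordinates and check that the patched time-shift $\tau$ yields a surface that is simultaneously smooth in overlapping boxes. Second, the claim that embeddedness of the interior and the two return-time bounds survive ``for a sufficiently small perturbation'' is only clear on the compact part $S_\varepsilon$; near $\partial S$ it must be deduced from the helicoidal model and from the matching of the two constructions on the overlap collar, which you describe but do not verify. These are repairable with standard care; the genuine gap is the unproved helicoid normal form near the binding.
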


\begin{proof}[Proof of Theorem \ref{thm_converse_contact}]
 Let $h$ be an orbit equivalence between $\varphi_1$ and $\varphi_2$. 
 Up to conjugating $\varphi_1$ by a diffeomorphism in the isotopy class of $h$, we can assume that $\varphi_1$ and $\varphi_2$ are isotopically equivalent and $h$ is isotopic to the identity. Showing that $\xi_1$ and $\xi_2$ are isotopic for this new flow will imply that $\xi_1$ and $\xi_2$ are contactomorphic for the original one.

 Let $S$ be a smooth Birkhoff section for $\varphi_1$. Then $h(S)$ is a topological Birkhoff section for $\varphi_2$. By the lemma above, we can isotope $h(S)$ to a smooth tame Birkhoff section $S_2$ of $\varphi_2$. Thus the open book $(B_2,\theta_2)$ induced by $S_2$ supports $\xi_2$. Now we isotope $h$ to a smooth map $g$ relative to the boundary of the original Birkhoff section $S$. Then, the open book $(B_2,\theta_2)$ pulls back to an open book $(B',\theta')$ with page $S' = g^{-1}S_2$ that supports $\xi_1' = g^*\xi_2$. This smoothing can be arranged so that pre-images of pages agree with those of an open book $(B,\theta)$ coming from the original Birkhoff section $S$ near the binding. Then one notes that  the open book $(B',\theta')$ is isotopic to the original one and we deduce that the corresponding contact structures are contactomorphic. Since $h$ (and hence $g$) is isotopic to the identity, they are in fact isotopic.
 \end{proof}

\subsection{Applications to contact topology}

Now we can use Theorem \ref{thm_isotopy_contact} to translate in the language of contact topology some known results about Anosov flows.  As a first example, Theorem \ref{thm_isotopy_contact} implies that the examples of skew-Anosov flows on hyperbolic 3-manifolds built in \cite{BowdenMann} (which are all contact flows when done using Foulon--Hasseblatt contact surgery) have non-contactomorphic contact structures. Thus, we immediately obtain

\begin{theorem}\label{thm_N_contact_structures}
 For any $N\in \bN$ there exists an hyperbolic $3$-manifold with at least $N$ non-contactomorphic Anosov contact structures.
\end{theorem}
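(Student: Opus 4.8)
The plan is to combine the construction of Bowden--Mann \cite{BowdenMann} with Theorem \ref{thm_isotopy_contact}. The starting point is that, for every $N\in\bN$, the manifolds produced in \cite{BowdenMann} furnish a single closed hyperbolic $3$-manifold $M$ admitting at least $N$ skew $\bR$-covered Anosov flows $\varphi_1,\dots,\varphi_N$ which are pairwise \emph{not} orbit equivalent. The point I would then use is that these flows are constructed by Foulon--Hasselblatt contact surgery \cite{FouHass_contact}, so each $\varphi_i$ is the Reeb flow of a contact form $\alpha_i$; setting $\xi_i=\ker\alpha_i$ produces $N$ Anosov contact structures on $M$.

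I would then show the $\xi_i$ are pairwise non-isomorphic by contraposition. If $\xi_i$ and $\xi_j$ were isomorphic for some $i\neq j$, Theorem \ref{thm_isotopy_contact} would imply that $\varphi_i$ and $\varphi_j$ are orbit equivalent, contradicting the defining property of the Bowden--Mann family. Hence $\xi_1,\dots,\xi_N$ are $N$ pairwise non-isomorphic Anosov contact structures on the hyperbolic $3$-manifold $M$, as desired.

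The only genuine content is imported from \cite{BowdenMann} and the Foulon--Hasselblatt construction: namely that the surgered flows are honestly contact (already observed in \cite{BowdenMann}) and that the number of pairwise inequivalent such flows on a fixed hyperbolic manifold is unbounded in $N$. Granting these, the argument is immediate; in particular it uses only the easy implication ``isomorphic contact structures $\Rightarrow$ orbit equivalent Reeb flows'' of Theorem \ref{thm_isotopy_contact}, not its converse (Theorem \ref{thm_converse_contact}). The main obstacle, such as it is, is purely bookkeeping: making sure the cited construction simultaneously achieves hyperbolicity of $M$, the contact property of all the flows, and the lower bound $N$ on the number of orbit equivalence classes.
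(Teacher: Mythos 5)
Your proposal is correct and matches the paper's argument exactly: the paper also takes the Bowden--Mann hyperbolic examples (contact via Foulon--Hasselblatt surgery) and applies Theorem \ref{thm_isotopy_contact} in the contrapositive to conclude the contact structures are pairwise non-isomorphic. Your observation that only Theorem \ref{thm_isotopy_contact}, and not its converse, is needed is likewise in line with the paper.
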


This result answers affirmatively a question raised in \cite{FHV}, see also \cite[Question 7.4]{BowdenMann}.

We can also translate Theorems \ref{thm_oneJSJpiece} and \ref{thm_linear_graph} to a description of contact transformation groups, as follows.    
For a $3$-manifold $M$ with Anosov contact structure $\xi$, we follow \cite{GirouxMassot} and denote by $\cD(M,\xi)$ the group of contact transformations of $(M,\xi)$. 
Thus, there is a natural inclusion of $\pi_0\cD(M,\xi)$ in $\MCG(M)$.

\begin{theorem}\label{thm_ContactMCG}
Let $\xi$ be a Anosov contact structure on $M$.
Suppose that either
\begin{enumerate}
 \item $M$ has a unique JSJ piece which is atoroidal, or 
  \item each torus of the JSJ decomposition of the manifold $M$ is separating. 
\end{enumerate}
In the first case, let $D_\xi$ denote the subgroup of $\MCG(M)$ generated by Dehn twists on the JSJ tori.  In the second case, let $D_\xi$ be the subgroup of $\MCG(M)$ generated by all Dehn twists in the directions of periodic orbits together with any Dehn twists on the JSJ tori. 

Then, any class $[f] \in D_\xi$ admits a representative $f\in \cD(M,\xi)$, and conversely, there exists $n\in \bN$ such that, for any class $[f] \in \pi_0\cD(M,\xi)$, we have $[f]^n \in D_\xi$.
\end{theorem}

\begin{rem}
 This result partially extends a theorem of Giroux and Massot \cite{GirouxMassot}, who obtained this for the case of Seifert fibered manifolds. Note that the result of Giroux and Massot is more precise, as ours only give a description of $\pi_0\cD(M,\xi)$ up to finite powers.
\end{rem}

\begin{rem}
 Notice that it is not necessary to know the Anosov Reeb flow in order to detect which Dehn twist is in a direction of a periodic orbit, by the following observation:
 Let $T$ be an embedded torus that is quasi-transverse to the Anosov flow $\varphi$. Up to an arbitrarily small perturbation, one can put $T$ in a convex position with respect to the contact structure $\xi$. Then an element $\alpha\in \pi_1(T)$ corresponding to a periodic orbit of the flow $\varphi$ also corresponds to the free homotopy class of a connected component of the dividing set of the characteristic foliation of $\xi$ on $T$. Therefore, one can use Theorem \ref{thm_ContactMCG} (or the translation of Corollary \ref{cor_horizontal_twists}, which can be obtained in the same way) directly in contact geometry without having to go through the Anosov side.
\end{rem}

 \begin{proof}
 We start by proving the converse implication.  
  Let $f \in \cD(M,\xi)$ and let $\phi$ be the (Anosov) Reeb flow.  Then $f^{-1}\circ \varphi^t \circ f$ is a contact Anosov flow with contact structure $f_{\ast} \xi = \xi$. Thus by Theorem \ref{thm_isotopy_contact}, $f^{-1}\circ \varphi^t \circ f$ and $\varphi^t$ are isotopically equivalent. Let $h \colon M \to M$ be an orbit equivalence isotopic to identity between $f^{-1}\circ \varphi^t \circ f$ and $\varphi^t$, then $h\circ f$ is a self orbit equivalence of $\varphi^t$. Hence, by Theorems \ref{thm_oneJSJpiece} or \ref{thm_linear_graph} depending on the case, there exists $n$ such that $[f^n] = [(h\circ f)^n] \in D_\xi$.
  
  Now, for the second part, let $[f]$ be a class in $D_\xi$. Then, by Theorems \ref{thm_oneJSJpiece} or \ref{thm_linear_graph}, $\psi = f^{-1}\circ \varphi \circ f$ is isotopically equivalent to $\varphi$. Moreover, the contact structure of $\psi$ is $f_\ast\xi$, where $\xi$ is the contact structure of $\varphi$.
  By Theorem \ref{thm_isotopy_contact}, $f_\ast\xi$ and $\xi$ are isotopic, so there exists $g$ in the same isotopy class as $f$ such that $g_\ast\xi = \xi$. That is, $g\in \cD(M,\xi)$.
 \end{proof}
 
\bibliographystyle{amsalpha}
\bibliography{biblio_orb_eq}

 \end{document}